\def\ig#1#2#3#4{\begin{figure}[!ht]\begin{center}%
\includegraphics[height=#2\textheight]{#1.eps}\caption{#4}\label{#3}%
\end{center}\end{figure}}
\def\thtext#1{
  \catcode`@=11
  \gdef\@thmcountersep{. #1}
  \catcode`@=12
}
\def\threst{
  \catcode`@=11
  \gdef\@thmcountersep{.}
  \catcode`@=12
}
\theoremstyle{plain}
\newtheorem{thm}{Theorem}[section]
\newtheorem{prop}[thm]{Proposition}
\newtheorem{cor}[thm]{Corollary}
\theoremstyle{definition}
\newtheorem{rk}[thm]{Remark}
\newtheorem{examp}[thm]{Example}
 \def\.{.\spacefactor\@m}
\def\R{\mathbb R}
\def\a{\alpha}
\def\e{\varepsilon}
\def\dl{\delta}
\def\D{\Delta}
\def\g{\gamma}
\def\l{\lambda}
\def\s{\sigma}
\def\0{\emptyset}
\def\:{\colon}
\def\<{\langle}
\def\>{\rangle}
\def\[{\llbracket}
\def\]{\rrbracket}
\def\rom#1{\emph{#1}}
\def\({\rom(}
\def\){\rom)}
\def\ss{\subset}
\def\sp{\supset}
\def\x{\times}
\def\bcAD{\overline{\mathstrut\cAD}}
\def\CARD{\operatorname{CARD}}
\def\diam{\operatorname{diam}}
\def\dis{\operatorname{dis}}
\def\Ext{\operatorname{Ext}}
\def\mst{\operatorname{mst}}
\def\MST{\operatorname{MST}}
\def\cA{{\cal A}}
\def\cAD{\mathcal{AD}}
\def\cD{{\cal D}}
\def\cH{{\cal H}}
\def\cM{{\cal M}}
\def\cP{{\cal P}}
\def\cR{{\cal R}}
\begin{document}
\title{Metric Space Recognition by Gromov--Hausdorff Distances to Simplexes}
\author{A.\,O.~Ivanov, E.\,S.~Lychagina, and A.\,A.~Tuzhilin}
\date{}
\maketitle

\begin{abstract}
In the present paper a distinguishability of bounded metric spaces by the set of the Gromov--Hausdorff distances to so-called simplexes (metric spaces with unique non-zero distance) is investigated. It is easy to construct an example of non-isometric metric spaces such that the Gromov--Hausdorff distance between them vanishes. Such spaces are non-distinguishable, of course. But we give examples of non-distinguishable metric spaces, the Gromov--Hausdorff distance between which is non-zero. More over, we prove several sufficient and necessary conditions for metric spaces to be non-distinguishable.

\textbf{Keywords:} metric space, ultrametric space, minimal spanning tree, graph chromatic number, graph clique covering number, Borsuk's number, Hausdorff distance, Gromov--Hausdorff distance, one-distance metric space
\end{abstract}

\setlength{\epigraphrule}{0pt}

\section{Introduction}
\markright{\thesection.~Introduction}
Modern computer graphics, image comparison and recognition systems, which are actively developing at present, often use so-called ``spaces of spaces''. In addition to these applications, these spaces have a theoretical significance, and therefore continue to attract the attention of specialists in pure and applied mathematics. One of the possible approaches to the study of such spaces is based on construction of a distance function between spaces as a ``measure of their dissimilarity''. In~1914, F.~Hausdorff~\cite{Hausdorff} introduced a non-negative symmetric function on pairs of non-empty subsets of a metric space $X$, equal to the greatest lower bound of such numbers $r$ that one set is contained in the $r$-neighborhood of the other and vice versa. This function turned out to be a metric on the space of closed bounded subsets of $X$. Later, in~1975, D.~Edwards~\cite{Edwards} and, independently, in~1981, M.~Gromov~\cite{Gromov} generalized the Hausdorff construction to the family of all metric compacta, using their isometric embeddings into all possible ambient spaces (see the definition below). The resulting function is called the Gromov--Hausdorff distance, and the corresponding metric space of metric compacta, considered up to isometry, is called the Gromov--Hausdorff space and is denoted by $\cM$. The geometry of this space turns out to be rather bizarre and is actively studied. It is well known that $\cM$ is a complete, separable, geodesic space, and also that $\cM$ is not proper~\cite{BurBurIva,IvaNikTuz}.

The structure of the Gromov--Hausdorff space became the subject of the present paper. Pursuing the idea of ``introducing coordinates in the space $\cM$'', a family of metric spaces with one non-zero distance, which we will call simplexes, was proposed for the role of coordinate axes. More precisely, each axis is a set of simplexes of fixed cardinality (but different diameters), and a metric compact is associated with a set of distance functions from it to the points of the axis. The problem of calculating the Gromov--Hausdorff distance between arbitrary metric spaces is quite nontrivial, therefore this choice of coordinate axes is due to the relative simplicity of such distance functions, a detailed study of which is given in~\cite{GrigIvaTuzSimpDist}.

In the paper~\cite{IvaTuzDistSympl} an example of two finite metric spaces that do not differ in distances to finite simplexes was given (see also below). From the results of~\cite{GrigIvaTuzSimpDist} it immediately follows that the distances from these spaces to all possible simplexes are the same. The question arises: which spaces can be distinguished from each other in this way? We will give a number of necessary and sufficient conditions for the indistinguishability of spaces. We will show that the cardinalities of finite indistinguishable spaces must coincide. An example of a finite and a continuum indistinguishable spaces will be given, as well as a series of examples of indistinguishable spaces among which there are pairs of all possible cardinalities not less than the continuum.

\section{Preliminaries}\label{sec:GH}
\markright{\thesection.~Preliminaries}
Let us recall the necessary concepts and results concerning the Hausdorff and Gromov--Hausdorff distances. More detailed information can be found in~\cite{BurBurIva}.

Let $X$ be an arbitrary non-empty set. Denote by $\#X$ the cardinality of $X$. If $X$ is a metric space and $x$ and $y$ are its points, then by $|xy|$ we denote the distance between these points, and if $A$ and $B$ are non-empty subsets of $X$, then we set $|AB|=|BA|=\inf\bigl\{|ab|:a\in A,\,b\in B\bigr\}$. If $A=\{a\}$, then instead of $\bigl|\{a\}B\bigr|=\bigl|B\{a\}\bigr|$ we write $|aB|=|Ba|$. For a metric space $X$, a point $x\in X$ and a real number $r>0$, by $U_r(x)$ we denote the open ball with center at $x$ and radius $r$, and for a non-empty subset $A\ss X$ its $r$-neighborhood is defined as follows:
$$
U_r(A)=\big\{x\in X:|Ax|<r\big\}=\cup_{a\in A}U_r(a).
$$

Let $X$ be a set. By $\cP_0(X)$ we denote the family of all its non-empty subsets. Further, let $X$ be a metric space, and  $A,B\in\cP_0(X)$. The value
$$
d_H(A,B)=\inf\bigl\{r\in[0,\infty]:A\ss U_r(B)\quad\text{and}\quad U_r(A)\sp B\bigr\}
$$
is called the \emph{Hausdorff distance\/} between $A$ and $B$. It is well-known that $d_H$ is a metric on the set $\cH(X)$ of all non-empty closed bounded subsets of the metric space $X$.

Further, let $X$ and $Y$ be metric spaces. A triple $(X',Y',Z)$ consisting of a metric space $Z$ and its two subsets $X'$ and $Y'$ that are isometric to $X$ and $Y$, respectively, is called a \emph{realization of the pair $(X,Y)$}. The greatest lower bound $d_{GH}(X,Y)$ of reals $r$ such that there exists a realization $(X',Y',Z)$ of the pair $(X,Y)$ satisfying the inequality $d_H(X',Y')\le r$ is called the \emph{Gromov--Hausdorff distance\/} between the metric spaces $X$ and $Y$. It is well-known that the function $d_{GH}$ is a generalized pseudometric on the proper class of all metric spaces considered up to an isometry. This means that $d_{GH}$ can take infinite values ??and also be zero on a pair of non-isometric metric spaces. However, on the family of compact metric spaces, considered up to isometry, $d_{GH}$ is a metric.

For specific calculations of the Gromov--Hausdorff distance, the concept of distortion of generalized mappings --- binary relations --- turns out to be useful. A multivalued surjective mapping $R$ from a set $X$ to a set $Y$ is called a \emph{correspondence}. The set of all correspondences between $X$ and $Y$ is denoted by $\cR(X,Y)$. Thus, a correspondence is a binary relation between $X$ and $Y$ for which the natural projections onto $X$ and $Y$ are surjective.

For metric spaces $X$ and $Y$ and a binary relation $\s\in\cP_0(X\x Y)$ the value
$$
\dis\s:=\sup\Bigl\{\bigl||xx'|-|yy'|\bigr|:(x,y),\,(x',y')\in\s\Bigr\}
$$
is called the \emph{distortion\/} of the binary relation $\s$. The next result is well-known, see, for example,~\cite{BurBurIva}.

\begin{thm}\label{thm:GH-metri-and-relations}
For any non-empty metric spaces $X$ and $Y$ the following equality holds\/\rom:
$$
d_{GH}(X,Y)=\frac12\inf\bigl\{\dis R:R\in\cR(X,Y)\bigr\}.
$$
\end{thm}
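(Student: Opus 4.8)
The plan is to establish the two opposite inequalities whose combination gives the asserted equality. Throughout I note that $\cR(X,Y)$ is non-empty, since $X\x Y$ is itself a correspondence, so the infimum on the right is taken over a non-empty family; both sides may equal $+\infty$, with the convention $\frac12\cdot\infty=\infty$, so it suffices to treat the finite case.

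First I would prove $d_{GH}(X,Y)\le\frac12\inf\{\dis R:R\in\cR(X,Y)\}$ by turning a correspondence into a realization. Fix $R\in\cR(X,Y)$ with $\dis R<\infty$ and put $r=\frac12\dis R$. On the disjoint union $X\sqcup Y$ I keep the original metrics on each factor and define the cross-distances by
$$
|xy|:=\inf\bigl\{|xx'|+r+|y'y|:(x',y')\in R\bigr\},\qquad x\in X,\ y\in Y.
$$
Symmetry is immediate, and for $r>0$ every cross-distance is at least $r>0$, so the function is non-degenerate; moreover each point lies within distance $r$ of its $R$-partner, whence the Hausdorff distance between the two factors is at most $r$ and $d_{GH}(X,Y)\le r$ follows once we know this is a metric. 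The main obstacle is exactly the triangle inequality, and it is here that the distortion bound enters. The cases with the middle point in the same factor as an endpoint reduce quickly to the triangle inequality within $X$ or within $Y$ and the defining infimum. The delicate case is $x,x''\in X$ with a middle point $y\in Y$: choosing near-optimal pairs $(x',y'),(x''',y''')\in R$ witnessing $|xy|$ and $|yx''|$, one bounds $|x'x'''|\le|y'y'''|+\dis R=|y'y'''|+2r$ via $\bigl||x'x'''|-|y'y'''|\bigr|\le\dis R$, and the two copies of $r$ built into the cross-distances absorb the term $2r$ precisely. If $\dis R=0$ the construction yields only a pseudometric; I would then replace $r$ by $r+\e$ to obtain a genuine metric, conclude $d_{GH}(X,Y)\le r+\e$, and let $\e\to0$. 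Taking the infimum over $R$ completes this direction.

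For the reverse inequality $\frac12\inf\{\dis R\}\le d_{GH}(X,Y)$ I would pass from realizations back to correspondences. Fix any $r>d_{GH}(X,Y)$ and, by definition, choose a realization $(X',Y',Z)$ with $d_H(X',Y')<r$, identifying $X$ with $X'$ and $Y$ with $Y'$ inside $Z$. Define
$$
R:=\bigl\{(x,y)\in X\x Y:|xy|_Z<r\bigr\}.
$$
Since $X'\ss U_r(Y')$ and $Y'\ss U_r(X')$, both projections of $R$ are surjective, so $R\in\cR(X,Y)$. For any $(x,y),(x',y')\in R$ the triangle inequality in $Z$ gives $\bigl||xx'|-|yy'|\bigr|\le|xy|_Z+|x'y'|_Z<2r$, hence $\dis R\le2r$ and $\frac12\inf\{\dis R\}\le r$. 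Letting $r\downarrow d_{GH}(X,Y)$ yields the desired inequality, and combining it with the first direction proves the theorem. I expect the verification of the triangle inequality in the glued metric to be the only genuinely technical point; everything else is a direct consequence of the definitions and the triangle inequality in $Z$.
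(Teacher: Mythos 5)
Your proof is correct: the paper itself states this theorem without proof, citing~\cite{BurBurIva}, and your two-inequality argument --- gluing a metric on $X\sqcup Y$ via the cross-distance $|xy|=\inf\bigl\{|xx'|+r+|y'y|:(x',y')\in R\bigr\}$ with $r=\frac12\dis R$, and conversely extracting the correspondence $R=\bigl\{(x,y):|xy|_Z<r\bigr\}$ from a near-optimal realization --- is exactly the standard argument given in that reference. Your treatment of the only delicate points is also sound: the middle-point-in-the-other-factor case of the triangle inequality is correctly absorbed by the two built-in copies of $r$ via $\bigl||x'x'''|-|y'y'''|\bigr|\le\dis R$, and the degenerate case $\dis R=0$ is properly repaired by passing to $r+\e$ and letting $\e\to0$.
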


For a cardinal number $n$, let $\D_n$ denote the metric space of cardinality $n$ in which all nonzero distances are equal to $1$. Note that $\D_1$ is a one-point metric space. For a metric space $X$ and a positive real number $\l$, let $\l X$ denote the metric space obtained from $X$ by multiplying all distances by $\l$. If $X$ is bounded and $\l=0$, then let $\l X=\D_1$. Clearly, $\l\D_1=\D_1$ for every $\l\ge0$. The spaces $\l\D_n$ are referred as \emph{simplexes of cardinality $n$}.

For a metric space $X$, let $\diam X$ denote its diameter, i.e., the value $\inf\bigl\{|xy|:x,y\in X\bigr\}$. Then it is easy to see that $2d_{GH}(\D_1,X)=\diam X$, see~\cite{BurBurIva}.

\section{Indistinguishable Metric Spaces}
\markright{\thesection.~Indistinguishable Metric Spaces}

We start with the following result.

\begin{thm}[\cite{GrigIvaTuzSimpDist}]\label{thm:BigSympl}
Let $X$ be a metric space, $\l\ge0$ a real number, and let $n$ be a cardinal number such that $\#X<n$, then
$$
2d_{GH}(\l\D_n,X)=\max\{\l,\,\diam X-\l\}.
$$
\end{thm}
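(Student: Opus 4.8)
The plan is to invoke Theorem~\ref{thm:GH-metri-and-relations} and thereby reduce the claim to computing $\inf\bigl\{\dis R:R\in\cR(\l\D_n,X)\bigr\}$ and showing that this infimum equals $\max\{\l,\diam X-\l\}$. Throughout I would write the points of $\l\D_n$ as $\{p_i\}_{i\in I}$ with $\#I=n$, so that $|p_ip_j|=\l$ for $i\ne j$, and for a correspondence $R$ and a point $p\in\l\D_n$ I let $R(p)\ss X$ denote the set of its partners. The degenerate case $\l=0$, where $\l\D_n=\D_1$, is already covered by the known identity $2d_{GH}(\D_1,X)=\diam X$, so I may assume $\l>0$.

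For the lower bound I would argue that \emph{every} $R\in\cR(\l\D_n,X)$ satisfies $\dis R\ge\max\{\l,\diam X-\l\}$. First, choosing for each $i$ a partner $x_i\in R(p_i)$ defines a map $I\to X$; since $\#X<n$ this map cannot be injective, so there are $i\ne j$ with $x_i=x_j=:x$. Then $(p_i,x),(p_j,x)\in R$ force $\dis R\ge\bigl||p_ip_j|-|xx|\bigr|=\l$. Second, for any $x,y\in X$ surjectivity of the projection to $X$ yields $p,p'$ with $(p,x),(p',y)\in R$; whether or not $p=p'$, the corresponding term of the distortion is $\bigl||pp'|-|xy|\bigr|$, which equals either $|xy|$ (if $p=p'$) or $\bigl|\l-|xy|\bigr|$ (if $p\ne p'$), and in both cases is at least $|xy|-\l$. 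Taking the supremum over $x,y$ gives $\dis R\ge\diam X-\l$.

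For the upper bound I would exhibit a single correspondence attaining the value. Since $\#X\le n$ there is a surjection $f$ from the point set $\{p_i\}$ onto $X$; let $R$ be its graph, which lies in $\cR(\l\D_n,X)$. Any two pairs in $R$ have the form $(p_i,f(p_i)),(p_j,f(p_j))$, so their contribution is $0$ when $i=j$, equals $\l$ when $i\ne j$ but $f(p_i)=f(p_j)$, and equals $\bigl|\l-|f(p_i)f(p_j)|\bigr|$ otherwise. Using the elementary bound $|\l-d|\le\max\{\l,\diam X-\l\}$ valid for every $d\in[0,\diam X]$, all these contributions are at most $\max\{\l,\diam X-\l\}$, whence $\dis R\le\max\{\l,\diam X-\l\}$.

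Combining the two bounds shows $\inf_R\dis R=\max\{\l,\diam X-\l\}$, and Theorem~\ref{thm:GH-metri-and-relations} then finishes the proof. The only genuinely delicate points are the pigeonhole step, which I must phrase as a statement about cardinals (an injection $I\to X$ would force $n\le\#X$, contradicting $\#X<n$), and the case distinction $p=p'$ versus $p\ne p'$ in the lower bound, where I rely on $|xy|\ge|xy|-\l$ to absorb the coincident case; the supremum estimate $\sup_{d\in[0,\diam X]}|\l-d|=\max\{\l,\diam X-\l\}$ is then a routine one-variable observation.
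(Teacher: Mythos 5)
Your proof is correct. The paper itself does not prove Theorem~\ref{thm:BigSympl} --- it imports it from~\cite{GrigIvaTuzSimpDist} with only a citation --- and your argument via Theorem~\ref{thm:GH-metri-and-relations} is exactly the standard one from that source: the pigeonhole step (a choice of partners $i\mapsto x_i$ cannot be injective since $\#X<n$) forces $\dis R\ge\l$, surjectivity onto $X$ forces $\dis R\ge\diam X-\l$ in both the coincident and non-coincident cases, and the graph of a surjection $\{p_i\}\to X$ attains $\max\{\l,\,\diam X-\l\}$ via the elementary bound $|\l-d|\le\max\{\l,\diam X-\l\}$ for $d\in[0,\diam X]$, with the $\l=0$ case correctly reduced to $2d_{GH}(\D_1,X)=\diam X$.
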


In the present paper, we call metric spaces \emph{indistinguishable\/} if the Gromov--Hausdorff distances from them to each simplex are the same. Otherwise, we call the spaces \emph{distinguishable}. Theorem~\ref{thm:BigSympl} immediately implies the following result.

\begin{cor}\label{cor:diam}
If metric spaces are indistinguishable, then their diameters are the same.
\end{cor}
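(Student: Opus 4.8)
The plan is to deduce Corollary~\ref{cor:diam} directly from Theorem~\ref{thm:BigSympl} by choosing a single sufficiently large simplex as a ``probe''. Suppose $X$ and $Y$ are indistinguishable metric spaces; by definition this means $d_{GH}(\l\D_n,X)=d_{GH}(\l\D_n,Y)$ for every cardinal $n$ and every real $\l\ge0$. I want to extract $\diam X=\diam Y$ from this single scalar family of equalities.

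First I would fix a cardinal number $n$ strictly larger than both $\#X$ and $\#Y$ (for instance any $n>\max\{\#X,\#Y\}$, which exists since one may always pass to a larger cardinal). With this choice, Theorem~\ref{thm:BigSympl} applies simultaneously to both spaces, giving
$$
2d_{GH}(\l\D_n,X)=\max\{\l,\,\diam X-\l\}\quad\text{and}\quad 2d_{GH}(\l\D_n,Y)=\max\{\l,\,\diam Y-\l\}
$$
for every $\l\ge0$. Indistinguishability forces the left-hand sides to agree, hence $\max\{\l,\,\diam X-\l\}=\max\{\l,\,\diam Y-\l\}$ for all $\l\ge0$.

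The remaining step is to show that this equality of $\max$-expressions, holding for all $\l\ge0$, forces $\diam X=\diam Y$. The clean way is simply to evaluate at $\l=0$: the equation becomes $\max\{0,\diam X\}=\max\{0,\diam Y\}$, and since diameters are non-negative this reads $\diam X=\diam Y$, completing the argument. (Alternatively one can note that for small $\l$, say $0\le\l<\tfrac12\diam X$, the maximum is attained by the second term, so the function $\l\mapsto\max\{\l,\diam X-\l\}$ determines $\diam X$ as twice its value at $\l=0$.)

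I do not anticipate a genuine obstacle here: the corollary is essentially a one-line consequence of Theorem~\ref{thm:BigSympl}. The only point requiring the slightest care is the legitimacy of choosing a cardinal $n$ exceeding $\#X$ and $\#Y$ at once, so that the cited theorem's hypothesis $\#X<n$ is met for both spaces simultaneously; this is immediate since the class of cardinals has no maximum. Everything else is the trivial evaluation of the resulting identity at $\l=0$.
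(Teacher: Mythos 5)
Your proof is correct and is essentially the paper's own argument: the paper states that Corollary~\ref{cor:diam} follows immediately from Theorem~\ref{thm:BigSympl}, which is exactly your step of choosing a cardinal $n>\max\{\#X,\#Y\}$ so that $\max\{\l,\diam X-\l\}=\max\{\l,\diam Y-\l\}$ for all $\l\ge0$. Your evaluation at $\l=0$ (where $\l\D_n$ degenerates to $\D_1$ and the formula reduces to the known identity $2d_{GH}(\D_1,X)=\diam X$) is a clean way to finish, and the only delicate point --- that a cardinal exceeding both $\#X$ and $\#Y$ exists --- is handled correctly.
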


\begin{rk}
It also follows from Theorem~\ref{thm:BigSympl} that metric spaces $X$ and $Y$ of the same diameter can be distinguished only by simplexes, whose cardinality is less than or equals to $\max\{\#X,\#Y\}$.
\end{rk}

The following results concern the computation of the Gromov--Hausdorff distance from a metric space $X$ to simplexes, whose cardinality does not exceed $\#X$. Let $X$ be an arbitrary set and $n$ a cardinal number not exceeding $\#X$. By $\cD_n(X)$ we denote the family of all possible partitions of $X$ into $n$ nonempty subsets.

Now, let $X$ be a metric space. Then for each $D=\{X_i\}_{i\in I}\in\cD_n(X)$ put
$$
\diam D=\sup_{i\in I}\diam X_i.
$$
Notice that $\diam D=\diam X$ for $D\in\cD_1(X)$.

Further, for each $D=\{X_i\}_{i\in I}\in\cD_n(X)$ put
$$
\a(D)=
\begin{cases}
\inf\bigl\{|X_iX_j|:i\ne j\bigr\}&\text{for $n\ge2$,}\\
\infty&\text{for $n=1$.}
\end{cases}
$$

\begin{thm}[\cite{GrigIvaTuzSimpDist}]\label{thm:GenFormulaSmallSympl}
For a metric space $X$, a cardinal number $n\le\#X$ and real $\l\ge0$ the equality
$$
2d_{GH}(\l\D_n,X)=\inf_{D\in\cD_n(X)}\max\Bigl\{\diam D,\l-\a(D),\diam X-\l\Bigr\}
$$
holds.
\end{thm}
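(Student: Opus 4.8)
The plan is to compute both sides through the correspondence characterization of Theorem~\ref{thm:GH-metri-and-relations}, writing $2d_{GH}(\l\D_n,X)=\inf\bigl\{\dis R:R\in\cR(\l\D_n,X)\bigr\}$, and then to match this infimum with the combinatorial one on the right. Write $\l\D_n=\{v_i\}_{i\in I}$ with $\#I=n$ and $|v_iv_j|=\l$ for $i\ne j$. To any correspondence $R$ I associate its fibers $X_i=\{x\in X:(v_i,x)\in R\}$; surjectivity of $R$ onto $\l\D_n$ makes every $X_i$ non-empty, and surjectivity onto $X$ makes $\{X_i\}_{i\in I}$ a covering of $X$. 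Conversely, each partition $D=\{X_i\}_{i\in I}\in\cD_n(X)$ yields the correspondence $R_D=\bigcup_i\{v_i\}\x X_i$. The whole argument is the comparison of $\dis R$ with $\max\bigl\{\diam D,\l-\a(D),\diam X-\l\bigr\}$ along this dictionary.

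First I would establish $2d_{GH}(\l\D_n,X)\le\inf_D\max\bigl\{\diam D,\l-\a(D),\diam X-\l\bigr\}$ by computing $\dis R_D$ for a partition $D$. Splitting the defining supremum according to whether the two chosen points of $\l\D_n$ coincide, the equal-index pairs contribute exactly $\sup_i\diam X_i=\diam D$, while the pairs with $i\ne j$ contribute $\sup_{i\ne j}\sup_{x\in X_i,\,y\in X_j}\bigl|\l-|xy|\bigr|$. Since $t\mapsto|\l-t|$ is convex, this inner supremum is attained at the extreme distances, giving $\max\bigl\{\l-\a(D),\,M(D)-\l\bigr\}$, where $M(D)$ is the supremum of distances realized between distinct parts. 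Using $\diam X=\max\{\diam D,M(D)\}$ and $\l\ge0$ one checks the elementary identity $\max\{\diam D,M(D)-\l\}=\max\{\diam D,\diam X-\l\}$, whence $\dis R_D=\max\bigl\{\diam D,\l-\a(D),\diam X-\l\bigr\}$ exactly. Taking the infimum over $D\in\cD_n(X)$, a non-empty family because $n\le\#X$, yields the bound.

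For the reverse inequality I would start from an arbitrary $R\in\cR(\l\D_n,X)$, put $d=\dis R$, and read off three facts from its fibers: equal-index pairs give $\diam X_i\le d$ for all $i$; for $i\ne j$ every $x\in X_i,\,y\in X_j$ satisfies $\bigl|\l-|xy|\bigr|\le d$, hence $|X_iX_j|\ge\l-d$; and since each pair of points of $X$ lies in some pair of fibers, every distance in $X$ is at most $\l+d$, so $\diam X-\l\le d$. It then suffices to produce a partition $D\in\cD_n(X)$ with $\diam D\le d$ and $\a(D)\ge\l-d$, for then $\max\bigl\{\diam D,\l-\a(D),\diam X-\l\bigr\}\le d=\dis R$, and passing to infima finishes the proof.

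The delicate point — and the step I expect to be the main obstacle — is manufacturing a partition into \emph{exactly} $n$ non-empty parts, since the fibers $X_i$ may overlap, and a naive refinement could create empty parts and, when split further, destroy the lower bound $\a\ge\l-d$. I would resolve this by a dichotomy on the sign of $\l-d$. If $d\ge\l$, the constraint $\a(D)\ge\l-d$ is vacuous because $\a\ge0$, so I refine $\{X_i\}$ to any partition and, using $n\le\#X$, split parts into smaller non-empty pieces until exactly $n$ parts remain; each part lies in some $X_i$, so $\diam D\le d$, while $\l-\a(D)\le\l\le d$. If instead $d<\l$, then $\l-d>0$ forces $|X_iX_j|\ge\l-d>0$ for all $i\ne j$, so the fibers are pairwise disjoint; thus $\{X_i\}_{i\in I}$ is \emph{already} a partition into exactly $n$ non-empty parts with $\diam D\le d$ and $\a(D)\ge\l-d$. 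In either case the required $D$ exists, completing the lower bound and hence the equality.
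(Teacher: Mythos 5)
The paper itself does not prove Theorem~\ref{thm:GenFormulaSmallSympl} --- it is imported from \cite{GrigIvaTuzSimpDist} --- but your argument is correct and follows essentially the same route as the original: correspondences $R\in\cR(\l\D_n,X)$ are matched with partitions via the fibers $X_i=R(v_i)$, the distortion of $R_D$ is computed exactly as $\max\bigl\{\diam D,\l-\a(D),\diam X-\l\bigr\}$ (your identity $\max\{\diam D,M(D)-\l\}=\max\{\diam D,\diam X-\l\}$ is the right way to absorb the cross-part suprema), and your dichotomy --- when $\dis R<\l$ the bound $|X_iX_j|\ge\l-\dis R>0$ forces the fibers to be pairwise disjoint, hence already a partition into exactly $n$ parts --- is precisely the key step of the cited proof. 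Two small points to polish: for infinite $n$ the phrase ``split parts until exactly $n$ parts remain'' should be replaced by an explicit cardinal argument (refine the cover to disjoint color classes $Y_j\ss X_{i(j)}$ using choice, set $n_j=\min\{\#Y_j,n\}$, check $\sum_j n_j=n$ from $n\le\#X$, and partition each $Y_j$ into $n_j$ nonempty pieces), and the degenerate value $\l=0$ should be dispatched separately through the paper's convention $0\cdot\D_n=\D_1$, since your model of $\l\D_n$ as $n$ points at mutual distance $\l$ is not a metric space in that case.
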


Put
$$
\a_n(X)=\sup_{D\in\cD_n(X)}\a(D).
$$
Notice that $\a_1(X)=\infty$.

\begin{cor}\label{cor:BigLambda}
For a metric space $X$, a cardinal number $n\le\#X$ and real $\l$ such that $\l\ge2\diam X$ the equality
$$
2d_{GH}(\l\D_n,X)=
\begin{cases}
\inf_{D\in\cD_n(X)}\bigl(\l-\a(D)\bigr)=\l-\a_n(X)&\text{for $n\ge2$,}\\
\diam X&\text{for $n=1$}
\end{cases}
$$
holds
\end{cor}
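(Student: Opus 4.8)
The plan is to derive the corollary directly from Theorem~\ref{thm:GenFormulaSmallSympl} by determining which of the three terms in the maximum
$$
\max\Bigl\{\diam D,\ \l-\a(D),\ \diam X-\l\Bigr\}
$$
dominates once $\l\ge2\diam X$. I would treat the two cases separately. For $n=1$ the family $\cD_1(X)$ consists of the single trivial partition $D=\{X\}$, for which $\diam D=\diam X$ and $\a(D)=\infty$; since $\l\ge2\diam X\ge0$ forces $\diam X-\l\le-\diam X\le\diam X$, the maximum collapses to $\diam X$, which is exactly the asserted value (and is consistent with $\l\D_1=\D_1$ and $2d_{GH}(\D_1,X)=\diam X$).

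For $n\ge2$ I would first record two elementary bounds valid for every $D=\{X_i\}_{i\in I}\in\cD_n(X)$. Since each part $X_i$ is a subset of $X$, we have $\diam D=\sup_i\diam X_i\le\diam X$. On the other hand, because $n\ge2$ there are at least two distinct parts $X_i,X_j$, and choosing $a\in X_i$, $b\in X_j$ gives $|X_iX_j|\le|ab|\le\diam X$; taking the infimum over $i\ne j$ yields $\a(D)\le\diam X$. In particular $\a_n(X)\le\diam X<\infty$, so the right-hand side of the claimed formula is well defined.

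With these bounds in hand, the chain
$$
\l-\a(D)\ \ge\ \l-\diam X\ \ge\ 2\diam X-\diam X\ =\ \diam X\ \ge\ \diam D
$$
shows that $\l-\a(D)$ dominates $\diam D$, while $\diam X-\l\le-\diam X\le0\le\diam D\le\l-\a(D)$ shows it also dominates the third term. Hence for every $D$ the maximum equals $\l-\a(D)$, and Theorem~\ref{thm:GenFormulaSmallSympl} gives
$$
2d_{GH}(\l\D_n,X)=\inf_{D\in\cD_n(X)}\bigl(\l-\a(D)\bigr)=\l-\sup_{D\in\cD_n(X)}\a(D)=\l-\a_n(X).
$$
There is no serious obstacle here; the only point requiring care is the uniform bound $\a(D)\le\diam X$, since it is precisely this inequality (together with the hypothesis $\l\ge2\diam X$) that makes $\l-\a(D)\ge\diam D$ hold simultaneously for \emph{all} partitions, allowing the infimum of the maxima to reduce to the infimum of the single surviving term.
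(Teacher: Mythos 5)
Your proof is correct and follows essentially the same route as the paper: the paper's argument likewise invokes Theorem~\ref{thm:GenFormulaSmallSympl} together with the two bounds $\diam D\le\diam X$ and $\a(D)\le\diam X$ to conclude that the maximum equals $\l-\a(D)$ uniformly in $D$ when $\l\ge2\diam X$. Your explicit treatment of the $n=1$ case (where $\a(D)=\infty$ makes the middle term drop out) is a small point the paper leaves implicit, but it changes nothing in substance.
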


\begin{proof}
Since for any partition $D$ the inequalities $\diam D\le \diam X$ and $\a(D)\le\diam X$ are valid, then
$$
\max\Bigl\{\diam D,\l-\a(D),\diam X-\l\Bigr\}=\l-\a(D)
$$
for $\l\ge2\diam X$,
from which the required conclusion follows.
\end{proof}

Let $\CARD$ stands for the proper class of non-zero cardinal numbers. For a metric space $X$ define a mapping  $\cA_X\:\CARD\to[0,\infty]$ as follows: $\cA_X(n)=\a_n(X)$ for $n\le\#X$, and $\cA_X(n)=0$ for $n>\#X$. Theorem~\ref{thm:BigSympl} and Corollary~\ref{cor:BigLambda} imply immediately the following result.

\begin{cor}
If metric spaces $X$ and $Y$ are indistinguishable, then $\cA_X=\cA_Y$.
\end{cor}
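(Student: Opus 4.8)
The plan is to establish $\cA_X(n)=\cA_Y(n)$ separately for each cardinal $n$, exploiting that indistinguishability yields the Gromov--Hausdorff distance to every simplex $\l\D_n$ at every scale $\l\ge0$, and in particular for arbitrarily large $\l$, where the formulas of Theorem~\ref{thm:BigSympl} and Corollary~\ref{cor:BigLambda} simplify. First I would record, via Corollary~\ref{cor:diam}, that the two spaces share a common diameter $d:=\diam X=\diam Y$, and dispose of the trivial ranges: for $n=1$ both values equal $\infty$ by definition, and for $n>\max\{\#X,\#Y\}$ both values equal $0$.

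Next, for $2\le n\le\min\{\#X,\#Y\}$, Corollary~\ref{cor:BigLambda} applies to each space. Choosing any $\l\ge2d$ gives $2d_{GH}(\l\D_n,X)=\l-\a_n(X)$ and $2d_{GH}(\l\D_n,Y)=\l-\a_n(Y)$. Since the two distances coincide by indistinguishability, cancelling $\l$ yields $\a_n(X)=\a_n(Y)$, that is, $\cA_X(n)=\cA_Y(n)$.

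The remaining and most delicate range is when the cardinalities differ, say $\#X<n\le\#Y$ (the symmetric case being identical). Here $\cA_X(n)=0$ by definition, so it suffices to prove $\a_n(Y)=0$. The point is that the cardinality condition $\#X<n$ places the distance to $X$ under Theorem~\ref{thm:BigSympl}, giving $2d_{GH}(\l\D_n,X)=\max\{\l,d-\l\}$, which equals $\l$ once $\l\ge2d$. For $Y$, since $2\le n\le\#Y$ (note $n>\#X\ge1$ forces $n\ge2$), Corollary~\ref{cor:BigLambda} again gives $2d_{GH}(\l\D_n,Y)=\l-\a_n(Y)$ for $\l\ge2d$. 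Equating the two distances forces $\l=\l-\a_n(Y)$, whence $\a_n(Y)=0=\cA_Y(n)$.

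The hard part is exactly this last case: one must notice that crossing the cardinality threshold switches the controlling formula from Theorem~\ref{thm:GenFormulaSmallSympl}, where the partition term $\a(D)$ survives, to Theorem~\ref{thm:BigSympl}, where it disappears entirely, and that comparing the two simplified expressions at large $\l$ is precisely what pins $\a_n(Y)$ to zero. Once this observation is made, everything reduces to the short cancellations above.
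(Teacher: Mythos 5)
Your proof is correct and follows essentially the same route as the paper, which derives this corollary ``immediately'' from Theorem~\ref{thm:BigSympl} and Corollary~\ref{cor:BigLambda}: evaluate the coinciding distance functions at $\l\ge2\diam X$, where they reduce to $\l-\a_n(\cdot)$ below the cardinality threshold and to $\l$ above it, and cancel $\l$. Your explicit case split (including the cross-range $\#X<n\le\#Y$ forcing $\a_n(Y)=0$) is exactly the spelled-out version of the paper's implicit argument.
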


\section{$\a$-connected Metric Spaces}
\markright{\thesection.~$\a$-connected Metric Spaces}
A metric space is called \emph{$\a$-connected\/} if for each $n\ge2$ the equality $\a_n(X)=0$ holds. Every connected space is $\a$-connected~\cite{GrigIvaTuzSimpDist}.

Since the transition from a partition to its enlargement does not increase the value of $\a$, the following result is true.	

\begin{prop}[\cite{GrigIvaTuzSimpDist}]\label{prop:cAmonot}
The function $\cA$ is non-increasing.
\end{prop}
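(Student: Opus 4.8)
The plan is to show that, for any fixed metric space $X$, the function $\cA_X$ satisfies $\cA_X(m)\ge\cA_X(n)$ whenever $m\le n$ as cardinals. First I would dispose of the boundary cases built into the definition of $\cA_X$: if $n>\#X$ then $\cA_X(n)=0$, while $\cA_X$ is nonnegative everywhere (each $\a_k(X)$ is an infimum of distances, hence lies in $[0,\infty]$), so $\cA_X(m)\ge0=\cA_X(n)$ holds automatically. Since $m\le n$, this leaves only the range $1\le m\le n\le\#X$, where $\cA_X(m)=\a_m(X)$ and $\cA_X(n)=\a_n(X)$; thus it suffices to prove $\a_m(X)\ge\a_n(X)$.

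The heart of the matter is that merging blocks of a partition cannot decrease $\a$ (equivalently, refining a partition cannot increase it) --- the monotonicity flagged in the text just before the statement. Concretely, let $D=\{X_i\}_{i\in I}\in\cD_n(X)$ and let $D'=\{Y_k\}_{k\in K}$ be obtained by fixing a partition $\{I_k\}_{k\in K}$ of the index set $I$ and merging blocks, $Y_k=\bigcup_{i\in I_k}X_i$. For $k\ne l$ the index sets $I_k$ and $I_l$ are disjoint, so every pair $i\in I_k$, $j\in I_l$ has $i\ne j$ and hence
$$
|Y_kY_l|=\inf\bigl\{|X_iX_j|:i\in I_k,\ j\in I_l\bigr\}\ge\inf\bigl\{|X_iX_j|:i\ne j\bigr\}=\a(D).
$$
Taking the infimum over $k\ne l$ gives $\a(D')\ge\a(D)$: the family of inter-block distances defining $\a(D')$ is a subfamily of the one defining $\a(D)$, so the infimum can only grow.

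I would then fix an arbitrary $D\in\cD_n(X)$ and coarsen it into a partition lying in $\cD_m(X)$. As $1\le m\le n=\#I$, the index set $I$ can be split into exactly $m$ nonempty groups, and merging the blocks of $D$ accordingly produces some $D'\in\cD_m(X)$; by the lemma $\a(D)\le\a(D')\le\a_m(X)$. Taking the supremum over all $D\in\cD_n(X)$ yields $\a_n(X)\le\a_m(X)$, which is the desired inequality. The extreme value $m=1$ causes no trouble, since $\a_1(X)=\infty$ dominates every $\a_n(X)$.

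The only genuinely delicate point is the existence, for arbitrary cardinals, of a coarsening of a given $n$-block partition into exactly $m$ blocks when $m\le n$. For finite $n$ this is immediate; for infinite $n$ it rests on the elementary fact of cardinal arithmetic that a set of cardinality $n$ admits a partition into exactly $m$ nonempty parts for every $1\le m\le n$ (fix $m$ distinguished elements as block representatives and distribute the remaining elements arbitrarily among them). Everything else reduces to the one-line subfamily-of-distances estimate above together with the bookkeeping in the definition of $\cA_X$.
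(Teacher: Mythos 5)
Your proof is correct and takes essentially the same route as the paper, whose entire justification is your key observation: passing from a partition to a coarsening removes pairs from the family of inter-block distances whose infimum defines $\a$, so $\a$ can only weakly increase, whence $\a_m(X)\ge\a_n(X)$ for $m\le n$ by taking suprema (the paper states this monotonicity in one line and refers to~\cite{GrigIvaTuzSimpDist} for details). Your write-up merely makes explicit the bookkeeping the paper leaves implicit --- the boundary cases $n>\#X$ and $m=1$, and the cardinal-arithmetic fact that an $n$-block partition admits a coarsening into exactly $m$ nonempty blocks for every $1\le m\le n$ --- all of which you handle correctly.
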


\begin{rk}
Proposition~\ref{prop:cAmonot} implies that a metric space $X$ is $\a$-connected, iff $\a_2(X)=0$.
\end{rk}

Let us give another corollary of Theorem~\ref{thm:GenFormulaSmallSympl}. Put
$$
d_n(X)=\inf_{D\in\cD_n(X)}\diam D.
$$
Notice that $d_1(X)=\diam X$.

\begin{cor}\label{cor:GHforAlpha0}
Let $X$ be an arbitrary $\a$-connected metric space, and assume that $n\le\#X$, then
$$
2d_{GH}(\l\D_n,X)=\max\bigl\{d_n(X),\l,\diam X-\l\bigr\}.
$$
\end{cor}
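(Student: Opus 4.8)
The plan is to specialize the general formula of Theorem~\ref{thm:GenFormulaSmallSympl}, using $\a$-connectedness to discard the term containing $\a(D)$. Recall that the theorem gives
$$
2d_{GH}(\l\D_n,X)=\inf_{D\in\cD_n(X)}\max\Bigl\{\diam D,\l-\a(D),\diam X-\l\Bigr\}.
$$
Assume first $n\ge2$. The key observation is that $\a$-connectedness means $\a_n(X)=\sup_{D\in\cD_n(X)}\a(D)=0$, whereas by definition $\a(D)=\inf\{|X_iX_j|:i\ne j\}\ge0$ for every partition $D$. A supremum of non-negative numbers can vanish only if every one of them vanishes, so in fact $\a(D)=0$ for \emph{all} $D\in\cD_n(X)$. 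Hence $\l-\a(D)=\l$ for every $D$, and the quantity under the infimum no longer involves $\a$.

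Next I would extract the two terms that are now constant in $D$. The expression to minimize is $\max\{\diam D,\l,\diam X-\l\}$, in which only $\diam D$ varies with $D$. Using the elementary identity $\inf_D\max\{h(D),c\}=\max\{\inf_D h(D),c\}$, valid for any constant $c$ and applied once with $c=\l$ and once with $c=\diam X-\l$, the infimum passes through to the single varying term:
$$
2d_{GH}(\l\D_n,X)=\max\Bigl\{\inf_{D\in\cD_n(X)}\diam D,\ \l,\ \diam X-\l\Bigr\}=\max\{d_n(X),\l,\diam X-\l\},
$$
since $\inf_{D\in\cD_n(X)}\diam D=d_n(X)$ by definition. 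This establishes the asserted equality for $n\ge2$.

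Finally, the case $n=1$ reduces to a single computation: $\cD_1(X)$ contains only the partition $D=\{X\}$, for which $\diam D=\diam X=d_1(X)$ and $\a(D)=\infty$, so the right-hand side of Theorem~\ref{thm:GenFormulaSmallSympl} collapses to $\max\{\diam X,\diam X-\l\}=\diam X$; since $\l\D_1=\D_1$ for all $\l$, the meaningful range is $\l\le\diam X$, on which this agrees with $\max\{d_1(X),\l,\diam X-\l\}$. The computation as a whole is short, and I expect the only point demanding genuine care to be the passage from $\sup_D\a(D)=0$ to the pointwise vanishing $\a(D)=0$: this is exactly where both the hypothesis of $\a$-connectedness and the non-negativity of set distances enter, and it is this vanishing alone that makes the $\l-\a(D)$ term inert and collapses the formula to the stated maximum.
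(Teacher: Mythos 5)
Your proof is correct and follows essentially the route the paper intends: the corollary appears there without proof precisely as the specialization of Theorem~\ref{thm:GenFormulaSmallSympl}, using that $\a_n(X)=0$ together with $\a(D)\ge0$ forces $\a(D)=0$ for every $D\in\cD_n(X)$, after which the $D$-independent terms $\l$ and $\diam X-\l$ pass out of the infimum via $\inf_D\max\{h(D),c\}=\max\{\inf_D h(D),c\}$. Your caveat for $n=1$ is well taken --- since $\l\D_1=\D_1$, the left-hand side equals $\diam X$ for all $\l$, so the stated formula (whose right-hand side is $\l$ when $\l>\diam X$) is literally valid only for $\l\le\diam X$; this is an edge case in the paper's own formulation, and your handling of it is sound.
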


Since $\max\bigl\{\l,\diam X-\l\bigr\}\ge(\diam X)/2$, then these characteristics do not distinguish metric spaces as $d_n(X)\le(\diam X)/2$. For a metric space $X$ we define a mapping $\cD_X\:\CARD\to[0,\infty]$ as follows:
$$
\cD_X(n)=
\begin{cases}
d_n(X)&\text{for $n\le\#X$,}\\
0&\text{for $n>\#X$.}
\end{cases}
$$
Put
$$
\cD_X^{1/2}(n)=
\begin{cases}
\cD_X(n)&\text{for $\cD_X(n)\ge(\diam X)/2$,}\\
0&\text{otherwise.}
\end{cases}
$$

\begin{cor}
Let $X$ and $Y$ be arbitrary $\a$-connected metric spaces. If $X$ and $Y$ are indistinguishable, then $\cD_X^{1/2}=\cD_Y^{1/2}$.
\end{cor}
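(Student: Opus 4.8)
The plan is to read off $\cD_X^{1/2}$ from a single Gromov--Hausdorff distance, namely the distance to the ``balanced'' simplex $(\diam X/2)\D_n$, exploiting that at $\l=\diam X/2$ the two tent terms $\l$ and $\diam X-\l$ coincide and both equal $\diam X/2$. First, by Corollary~\ref{cor:diam} indistinguishability forces $\diam X=\diam Y$; I write $d$ for this common value and assume $d>0$, the case $d=0$ being trivial since then both spaces are one-point. Fix a cardinal $n$ and put $\l=d/2$.

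Next I compute $2d_{GH}\bigl((d/2)\D_n,X\bigr)$. If $n\le\#X$, then $X$ is $\a$-connected and Corollary~\ref{cor:GHforAlpha0} gives $2d_{GH}\bigl((d/2)\D_n,X\bigr)=\max\{d_n(X),d/2,d/2\}=\max\{\cD_X(n),d/2\}$; if $n>\#X$, Theorem~\ref{thm:BigSympl} gives $\max\{d/2,d/2\}=d/2=\max\{\cD_X(n),d/2\}$, since $\cD_X(n)=0$ in that range. Thus $2d_{GH}\bigl((d/2)\D_n,X\bigr)=\max\{\cD_X(n),d/2\}$ for every $n$, and likewise for $Y$. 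Now I reconstruct $\cD_X^{1/2}$ from this quantity: if $\max\{\cD_X(n),d/2\}>d/2$ then $\cD_X(n)>d/2$, so $\cD_X^{1/2}(n)=\cD_X(n)=2d_{GH}\bigl((d/2)\D_n,X\bigr)$; if instead $\max\{\cD_X(n),d/2\}=d/2$ then $\cD_X(n)\le d/2$ and $\cD_X^{1/2}(n)=0$. Hence $\cD_X^{1/2}(n)$ depends only on the number $2d_{GH}\bigl((d/2)\D_n,X\bigr)$ together with the known value $d$. Since indistinguishability gives $d_{GH}\bigl((d/2)\D_n,X\bigr)=d_{GH}\bigl((d/2)\D_n,Y\bigr)$ for all $n$, I conclude $\cD_X^{1/2}=\cD_Y^{1/2}$.

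The delicate step, and the main obstacle, is the boundary value $\cD_X(n)=d/2$. The simplex distances genuinely cannot see it: whenever $d_n(X)\le d/2$ the tent term $\max\{\l,d-\l\}\ge d/2$ dominates $d_n(X)$ for \emph{every} $\l$, so $2d_{GH}(\l\D_n,X)=\max\{\l,d-\l\}$ identically, exactly as in the case $d_n(X)<d/2$; sweeping over all $\l$ therefore yields nothing beyond the balanced evaluation. Consequently the reconstruction is forced to send every value $\cD_X(n)\le d/2$ to $0$, and the argument succeeds precisely because $\cD_X^{1/2}$ discards such values --- this is the role of the threshold at $(\diam X)/2$, in agreement with the remark preceding its definition. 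I would thus treat the case $\cD_X(n)=d/2$ on the same footing as $\cD_X(n)<d/2$, so that the only information actually certified as an isometry invariant is the part of $\cD_X$ lying strictly above $(\diam X)/2$, which is exactly what $\cD_X^{1/2}$ records.
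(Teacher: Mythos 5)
Your argument is correct and is essentially the one the paper intends: the corollary is stated there without proof as an immediate consequence of Corollary~\ref{cor:GHforAlpha0} (with diameters equalized by Corollary~\ref{cor:diam}), and your evaluation at the balanced value $\l=d/2$, together with Theorem~\ref{thm:BigSympl} for $n>\#X$, is just a clean way of packaging that. The substantive point is the boundary case you flagged, and there you are right \emph{against the paper's letter}: the printed definition sets $\cD_X^{1/2}(n)=\cD_X(n)$ when $\cD_X(n)\ge(\diam X)/2$, with a non-strict inequality, and under that convention the corollary is actually false. The paper's own example of indistinguishable connected spaces furnishes a counterexample: take $X=[0,1]$ and $Y=[0,1]\x[0,1/2]$ with the maximum metric; these are indistinguishable (both are connected of diameter $1$ with $d_2\le1/2$, as shown in the example section), yet $d_3(X)=1/3<1/2$ while $d_3(Y)=1/2$ --- the six points $(i/2,j/2)$, $i\in\{0,1,2\}$, $j\in\{0,1\}$, are pairwise at distance at least $1/2$, so every $3$-partition of $Y$ has diameter at least $1/2$, and vertical slabs of width $1/3$ attain it. With the non-strict convention this gives $\cD_X^{1/2}(3)=0\ne1/2=\cD_Y^{1/2}(3)$ for an indistinguishable pair. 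Your decision to send the boundary value $\cD_X(n)=d/2$ to $0$, i.e.\ to read the definition with the strict inequality $\cD_X(n)>(\diam X)/2$, is exactly the correction needed, and your justification is the right one: since $\max\{\l,\diam X-\l\}\ge(\diam X)/2$ for every $\l$, any value $d_n(X)\le(\diam X)/2$ is masked in $2d_{GH}(\l\D_n,X)=\max\{d_n(X),\l,\diam X-\l\}$ for all $\l$ simultaneously, so only the part of $\cD_X$ strictly above the threshold is an invariant of the family of simplex distances. In short: proof correct, same route as the paper, and you have in addition identified a genuine (if small) flaw in the paper's definition that your strict-threshold reading repairs.
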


\section{Distinguishability and $\mst$-spectrum}
\markright{\thesection.~Distinguishability and $\mst$-spectrum}
Let $G=(V,E)$ be an arbitrary graph with vertex set $V$ and edge set $E$. We say that $G$ is \emph{defined on a metric space $X$} if $V\ss X$. For each such graph, define the \emph{lengths $|e|$} of its \emph{edge $e=vw$} as the distances $|vw|$ between the vertices $v$ and $w$ of this edge, and also the \emph{length $|G|$} of the \emph{graph $G$} itself as the sum of the lengths of all its edges.

Let $M$ be a finite metric space. Define the value $\mst(M)$ by setting it equal to the length of the shortest tree among all trees of the form $(M,E)$. The resulting value is called the \emph{length of the minimal spanning tree on $M$}; a tree $G=(M,E)$ such that $|G|=\mst(M)$ is called a \emph{minimal spanning tree on $M$}. Note that a minimal spanning tree exists for any finite metric space $M$. A minimal spanning tree is generally not uniquely defined. The set of all minimal spanning trees on $M$ is denoted by $\MST(M)$.

For $G\in\MST(M)$, let $\s(G)$ denote the vector of edge lengths of $G$, ordered in descending order. The following result is well known, see e.g.~\cite{Emel}.

\begin{prop}
For each $G_1,\,G_2\in\MST(M)$ the equality $\s(G_1)=\s(G_2)$ holds.
\end{prop}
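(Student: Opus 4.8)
The plan is to reduce the claim to the statement that any two minimal spanning trees $G_1,G_2\in\MST(M)$ carry the same multiset of edge lengths. Since $\s(G_i)$ is exactly this multiset written in descending order, and since every spanning tree on the finite set $M$ has precisely $\#M-1$ edges, equality of multisets forces equality of the sorted vectors, i.e. $\s(G_1)=\s(G_2)$. So from now on I regard each $G_i$ as its edge set and aim to show these two edge sets realize the same collection of lengths.

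First I would run an exchange argument. Assume $G_1\ne G_2$ and consider their symmetric difference $G_1\triangle G_2$; among all edges in it choose one of minimal length, say $e$, and suppose without loss of generality that $e\in G_1\setminus G_2$. Adjoining $e$ to the tree $G_2$ produces a unique cycle $C$, every edge of which except $e$ lies in $G_2$. The key step is to exhibit an edge $f\in C$ with $f\in G_2\setminus G_1$: if instead every edge of $C$ other than $e$ belonged to $G_1$, then together with $e\in G_1$ the whole cycle $C$ would sit inside the tree $G_1$, contradicting its acyclicity. Hence such an $f$ exists, and by the minimal choice of $e$ we have $|e|\le|f|$.

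Next I would carry out the swap $G_2':=(G_2\setminus\{f\})\cup\{e\}$. Because $f$ lies on the cycle $C$ created by inserting $e$, deleting it leaves a spanning tree, and $|G_2'|=|G_2|+|e|-|f|\le|G_2|$. Minimality of $G_2$ then forces $|e|=|f|$, so $G_2'$ is again a minimal spanning tree; moreover $e$ and $f$ both leave the symmetric difference, so $\#(G_1\triangle G_2')=\#(G_1\triangle G_2)-2$. Iterating this length-preserving exchange strictly decreases the size of the symmetric difference, so after finitely many steps $G_2$ is converted into $G_1$; as each step leaves the multiset of edge lengths untouched, $G_1$ and $G_2$ share that multiset, which is the desired conclusion.

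The main obstacle is to justify that the exchange is legitimate and genuinely length-preserving: one must confirm both that $G_2'$ is again a tree (controlled by the cycle structure above) and that minimality of $e$ together with minimality of $G_2$ upgrades $|e|\le|f|$ to $|e|=|f|$. An alternative, iteration-free route makes the invariance conceptual rather than combinatorial: for each threshold $w$ one checks that the edges of length strictly less than $w$ in any minimal spanning tree form a maximal spanning forest of the subgraph of $M$ whose edges join points at distance less than $w$, so their number equals $\#M$ minus the number of connected components of that subgraph — a quantity depending only on $M$. Subtracting the counts for two adjacent thresholds then shows that the number of tree edges of each prescribed length is an invariant of $M$, which again pins down the multiset and hence $\s(G_1)=\s(G_2)$.
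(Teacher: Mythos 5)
Your proof is correct, and there is a point of comparison worth making explicit: the paper does not prove this proposition at all --- it states it as well known with a reference to the graph-theory textbook of Emelichev et al., so your argument supplies a proof where the paper only cites. Your main route is the standard exchange argument, and every step that could go wrong is in fact justified: the edge $f\in C$ with $f\in G_2\setminus G_1$ exists because otherwise the cycle $C$ would lie entirely in the tree $G_1$; the minimal choice of $e$ in $G_1\triangle G_2$ gives $|e|\le|f|$ since $f$ also lies in the symmetric difference; minimality of $|G_2|$ upgrades this to $|e|=|f|$, so the swap $G_2'=(G_2\setminus\{f\})\cup\{e\}$ is again a minimal spanning tree with the same length multiset; and since $e$ and $f$ both leave the symmetric difference, $\#(G_1\triangle G_2')=\#(G_1\triangle G_2)-2$, so induction on $\#(G_1\triangle G_2)$ terminates. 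Together with the observation that any spanning tree on $M$ has exactly $\#M-1$ edges, equality of multisets does force $\s(G_1)=\s(G_2)$. Your alternative threshold argument is also sound and is essentially the proof found in standard references: for each $w>0$ the edges of length less than $w$ in any minimal spanning tree form a spanning forest with the same components as the graph on $M$ whose edges join points at distance less than $w$ (else a cheap edge crossing two such components could be exchanged for an expensive one, contradicting minimality), so the number of tree edges of length less than $w$ equals $\#M$ minus the number of those components, a quantity depending only on $M$; differencing in $w$ recovers the multiplicity of each length. This second route buys more than the exchange argument: it is iteration-free and exhibits the $\mst$-spectrum directly as an invariant computable from $M$ without reference to any particular tree, which is in the spirit of how the paper later uses $\s(M)$ via Theorem~\ref{thm:MSTSpec}.
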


For each finite metric space $M$ by $\s(M)$ we denote the vector $\s(G)$ for arbitrary $G\in\MST(M)$ and call it the  \emph{$\mst$-spectrum of the space $M$}.

\begin{thm}[\cite{TuzMSTSpec}]\label{thm:MSTSpec}
Let $X$ be a finite metric space consisting of $m$ points, $\s(X)=(\s_1,\ldots,\s_{m-1})$, and $\l\ge2\diam X$. Then $2d_{GH}(\l\D_{k+1},X)=\l-\s_k$, where $1\le k\le m-1$.
\end{thm}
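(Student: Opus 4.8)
The plan is to connect the $\mst$-spectrum of $X$ with the quantities $\a_n(X)$ that control the Gromov--Hausdorff distances to large simplexes via Corollary~\ref{cor:BigLambda}. Since $\l\ge2\diam X$, that corollary tells us that $2d_{GH}(\l\D_{k+1},X)=\l-\a_{k+1}(X)$, so the whole theorem reduces to proving the purely combinatorial identity
$$
\a_{k+1}(X)=\s_k,\qquad 1\le k\le m-1.
$$
Thus the heart of the matter is to show that $\sup_{D\in\cD_{k+1}(X)}\a(D)$, the largest possible ``minimal inter-cluster gap'' over all partitions of $X$ into $k+1$ nonempty pieces, equals the $k$-th largest edge length $\s_k$ in any minimal spanning tree on $X$. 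First I would recall that a partition $D$ into $k+1$ parts has $\a(D)=\min_{i\ne j}|X_iX_j|$, the smallest distance realized between two distinct blocks, and that I want to choose the blocks so that this smallest cross-distance is as large as possible.

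The key geometric idea is the classical single-linkage / minimal-spanning-tree clustering fact: if $G$ is a minimal spanning tree on $X$ and we delete its $k$ longest edges (lengths $\s_1\ge\cdots\ge\s_k$), the tree breaks into $k+1$ connected components $X_1,\dots,X_{k+1}$, and this partition is exactly the one maximizing the minimal inter-cluster distance. First I would prove the inequality $\a_{k+1}(X)\ge\s_k$ by exhibiting this particular partition: I delete the $k$ longest edges of $G$, and I claim that for the resulting components any cross-distance $|X_iX_j|$ is at least $\s_k$. Indeed, if some pair of points in different components were closer than $\s_k$, that short edge together with the tree path between them would, after swapping it for one of the deleted long edges, yield a spanning tree of strictly smaller total length, contradicting minimality of $G$ (this is the standard cycle/exchange argument for minimal spanning trees).

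For the reverse inequality $\a_{k+1}(X)\le\s_k$ I would take an arbitrary partition $D=\{Y_1,\dots,Y_{k+1}\}$ and argue that $\a(D)\le\s_k$. Consider the minimal spanning tree $G$; since $G$ is connected on $m$ points and $D$ has $k+1$ blocks, at least $k$ edges of $G$ must join points lying in two different blocks of $D$ (a spanning tree on $k+1$ ``supernodes'' needs at least $k$ connecting edges). Each such cross-block edge $e$ has length $|e|\ge\a(D)$, because its endpoints lie in distinct blocks and hence $|e|\ge|Y_iY_j|\ge\a(D)$. So $G$ has at least $k$ edges of length $\ge\a(D)$, which forces its $k$-th largest edge $\s_k\ge\a(D)$, i.e.\ $\a(D)\le\s_k$. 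Taking the supremum over $D$ gives $\a_{k+1}(X)\le\s_k$, and combined with the previous paragraph yields equality.

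I expect the main obstacle to be the careful bookkeeping in the reverse inequality: one must verify cleanly that any spanning tree of a connected graph, when its vertex set is partitioned into $k+1$ classes, contains at least $k$ edges crossing between classes, and that each such crossing edge has length bounded below by $\a(D)$. The first point is really the statement that contracting each block to a point leaves a connected multigraph on $k+1$ nodes, whose spanning subgraph (image of the tree) must contain a spanning tree with $k$ edges; the second is immediate from the definition of $\a(D)$ as an infimum of cross-block distances. The forward inequality's exchange argument is standard but deserves a sentence making explicit that cutting the $k$ longest edges cannot create any cross-distance shorter than $\s_k$ without violating minimality of $G$. Once the identity $\a_{k+1}(X)=\s_k$ is in hand, Corollary~\ref{cor:BigLambda} closes the proof instantly.
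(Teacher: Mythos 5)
Your proposal is correct and complete. Note that the paper itself gives no proof of this statement: it is imported from~\cite{TuzMSTSpec}, so there is nothing internal to compare against. Your route --- reducing via Corollary~\ref{cor:BigLambda} (valid here since $k+1\ge2$ and $k+1\le m=\#X$) to the identity $\a_{k+1}(X)=\s_k$, then proving that identity by single-linkage/MST duality --- is exactly the natural one, and it is essentially the argument of the cited source. Both inequalities are sound: deleting the $k$ longest edges of a minimal spanning tree yields a partition into $k+1$ components whose every cross-distance is at least $\s_k$ by the standard exchange argument (and in fact $\a(D)=\s_k$ there, since each deleted edge joins two distinct components, so the supremum is attained); conversely, for any $D\in\cD_{k+1}(X)$ the contraction argument shows the tree has at least $k$ block-crossing edges, each of length at least $\a(D)$, forcing $\s_k\ge\a(D)$. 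The only points worth making explicit in a write-up are the two you already flagged, plus the triviality that finiteness of $X$ turns the infima in $\a(D)$ and $|X_iX_j|$ into minima, so no limiting argument is needed anywhere.
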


\begin{cor}\label{cor:MSTSpect}
Let $X$ and $Y$ be indistinguishable finite metric spaces. Then their cardinalities are the same, and their diameters are equal to each other, and their $\mst$-spectra coincide.
\end{cor}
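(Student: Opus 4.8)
The plan is to deduce all three conclusions from the results already established. Suppose $X$ and $Y$ are indistinguishable finite metric spaces. First I would obtain the equality of diameters directly from Corollary~\ref{cor:diam}, which requires no further work. The substance of the claim lies in showing that the cardinalities coincide and that the $\mst$-spectra are equal, and for this the key tool is Theorem~\ref{thm:MSTSpec}, which expresses $2d_{GH}(\l\D_{k+1},X)$ in terms of the $k$-th entry of the $\mst$-spectrum for all sufficiently large $\l$.

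Let me set $\#X=m$ and $\#Y=m'$, with spectra $\s(X)=(\s_1,\ldots,\s_{m-1})$ and $\s(Y)=(\s_1',\ldots,\s_{m'-1}')$. The first step is to pin down the cardinalities. For this I would invoke Theorem~\ref{thm:BigSympl}: for a cardinal $n$ strictly exceeding $\#X$ and $\l\ge0$ one has $2d_{GH}(\l\D_n,X)=\max\{\l,\diam X-\l\}$, and in particular this value is at least $(\diam X)/2$ for every such $\l$. By contrast, Theorem~\ref{thm:MSTSpec} shows that for $n=m$ (i.e.\ $k=m-1$) and $\l\ge2\diam X$ the distance equals $\l-\s_{m-1}$, where $\s_{m-1}>0$ because $X$ is finite with more than one point and all nonzero distances are positive; thus for large $\l$ the value $\l-\s_{m-1}$ is strictly smaller than the ``saturated'' value $\l$ that one gets once the simplex cardinality exceeds $\#X$. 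Comparing the two regimes, indistinguishability forces the threshold cardinality at which the distance function switches from the $\l-\s_k$ behaviour to the $\max\{\l,\diam X-\l\}$ behaviour to be the same for $X$ and $Y$; this threshold is exactly $\#X+1$ (respectively $\#Y+1$), whence $m=m'$.

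Having established $m=m'$, the second step compares spectra entrywise. Fix any $k$ with $1\le k\le m-1$ and choose $\l\ge2\diam X=2\diam Y$. Since $X$ and $Y$ are indistinguishable, $2d_{GH}(\l\D_{k+1},X)=2d_{GH}(\l\D_{k+1},Y)$; applying Theorem~\ref{thm:MSTSpec} to both sides gives $\l-\s_k=\l-\s_k'$, hence $\s_k=\s_k'$. As $k$ was arbitrary, the whole spectra coincide, so $\s(X)=\s(Y)$.

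The main obstacle, such as it is, is the cardinality argument in the first step: one must argue cleanly that the ``phase transition'' in the value of $2d_{GH}(\l\D_n,X)$ as the simplex cardinality $n$ grows detects $\#X$, and that indistinguishability therefore aligns these transition points. The cleanest formulation is to note that, for $\l$ large, the function $n\mapsto 2d_{GH}(\l\D_n,X)$ is determined on $n\le m$ by $\l-\s_{n-1}$ (strictly below $\l$) and on $n>m$ by $\l$ itself, so the largest $n$ for which the value is strictly less than $\l$ equals $\#X$. Everything else is a direct substitution into the cited theorems, and I would present the whole argument in a few lines once these comparisons are made explicit.
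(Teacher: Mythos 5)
Your proposal is correct and follows essentially the same route as the paper: diameters from Corollary~\ref{cor:diam}, spectra by equating the formula of Theorem~\ref{thm:MSTSpec} for $\l\ge2\diam X$, and cardinalities by comparing the large-$\l$ values given by Theorem~\ref{thm:BigSympl} (for $n>\#X$) against those of Theorem~\ref{thm:MSTSpec}, using positivity of the spectrum entries. The paper phrases the cardinality step as a contradiction (taking $k=\#Y>\#X$ and deducing $\s_{k-1}(Y)=0$), while you phrase it as a threshold detecting $\#X$ in the function $n\mapsto 2d_{GH}(\l\D_n,X)$, but this is the same comparison in different words.
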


\begin{proof}
The diameters are equal to each other due to Corollary~\ref{cor:diam}. Let us verify the cardinalities coincidence. Assume the contrary, and let $\#X<\#Y$. Due to Corollary's assumption,  $d_{GH}(\l\D_k,X)=d_{GH}(\l\D_k,Y)$ for any $k$ and any $\l$. Chose $k>\#X$, then in accordance with Theorem~\ref{thm:BigSympl} we have
$$
d_{GH}(\l\D_k,X)=\max\{\l,\diam X-\l\}=\l \qquad\text{for $\l\ge2\diam X$}.
$$
On the other hand, due to Theorem~\ref{thm:MSTSpec}, we have $2d_{GH}(\l\D_{k},Y)=\l-\s_{k-1}(Y)$ for $k\le \#Y$, and hence
$$
\l=d_{GH}(\l\D_k,X)=d_{GH}(\l\D_k,X)=\l-\s_{k-1}(Y),
$$
for $k=\#Y$ and $\l\ge2\diam X$. The latter implies $\s_{k-1}(Y)=0$, a contradiction. Spectra coincidence follows from Theorem~\ref{thm:MSTSpec}.
\end{proof}

\section{Distinguishability of Ultrametric Spaces}
\markright{\thesection.~Distinguishability of Ultrametric Spaces}
Recall that a metric space $X$ is called \emph{ultrametric}, if its distance function satisfies the following enhanced triangle inequality:
$$
|xz|\le\max\bigl\{|xy|,\,|yz|\bigr\}
$$
for each $x,\,y,\,z\in X$. This inequality implies similar  ``polygon inequality'', namely, for an arbitrary set $x_1, x_2,\ldots,x_k$ of points of the space $X$ we have:
$$
|x_1x_k|\le\max\bigl\{|x_1x_2|,|x_2x_3|,\ldots,|x_{k-1}x_k|\bigr\}.
$$
In the paper~\cite{IvaTuzUltra} the next result is obtained.

\begin{thm}\label{thm:ultra}
Let $X$ be a finite ultrametric space consisting of $m$ points, and let $\s(X)=(\s_1,\s_2,\ldots,\s_{m-1})$ be its $\mst$-spectrum. Then for each positive integer $k>0$ we have
$$
2d_{GH}(\l\D_k,X)=
\begin{cases}
\s_1&\text{for $k=1$},\\
\max\{\s_1-\l,\,\s_k,\,\l-\s_{k-1}\}&\text{for $1<k<m$},\\
\max\{\s_1-\l,\,\l-\s_{m-1}\}&\text{for $k=m$},\\
\max\{\s_1-\l,\,\l\}&\text{for $k>m$}.
\end{cases}
$$
\end{thm}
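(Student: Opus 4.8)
The plan is to feed the combinatorial formula of Theorem~\ref{thm:GenFormulaSmallSympl} (and Theorem~\ref{thm:BigSympl} for large cardinalities) the values of $d_k(X)$ and $\a_k(X)$ expressed through the $\mst$-spectrum. Fix a minimal spanning tree $G$ of $X$, put $\sigma_m:=0$, and let $D_k^*\in\cD_k(X)$ be the partition into the connected components obtained by deleting the $k-1$ longest edges of $G$ (its internal edges are then exactly the $m-k$ shortest edges of $G$). I will prove the identities $d_k(X)=\sigma_k$ and $\a_k(X)=\sigma_{k-1}$, realized \emph{simultaneously} by the single partition $D_k^*$. This simultaneity is the whole reason the theorem looks so clean: the infimum of a maximum in Theorem~\ref{thm:GenFormulaSmallSympl} then collapses to the maximum of the separately optimal values, which fails for general metric spaces.

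\emph{Diameters.} For the upper bound $\diam D_k^*\le\sigma_k$ I use the ultrametric polygon inequality: two points in one component of $D_k^*$ are joined in $G$ by a path using only edges of length $\le\sigma_k$, so their distance is $\le\sigma_k$; since the kept edge of length $\sigma_k$ lies inside a component, $\diam D_k^*=\sigma_k$ (and $=0=\sigma_m$ when $k=m$). For the matching bound $d_k(X)\ge\sigma_k$, note that in an ultrametric space $|xy|\le t$ is an equivalence relation, so for $t<\sigma_k$ its classes form a partition $P_t$; the bottleneck (cut) property of $G$ shows each class lies in a single component of the subgraph $G_{\le t}$ of $G$ spanned by the edges of length $\le t$, whence $\#P_t$ is at least the number of components of $G_{\le t}$, which is $\ge k+1$ because the $k$ longest edges of $G$ are omitted. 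Any $D\in\cD_k(X)$ with $\diam D\le t$ refines $P_t$ and so would need $\ge k+1$ parts --- a contradiction; hence $d_k(X)=\sigma_k$.

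\emph{The quantity $\a$.} For $\a_k(X)\le\sigma_{k-1}$, contracting the parts of an arbitrary $D\in\cD_k(X)$ exhibits at least $k-1$ edges of $G$ joining distinct parts; as these crossing edges form a subfamily of $E(G)$ of size $\ge k-1$, the shortest of them has length $\le\sigma_{k-1}$, and $\a(D)$ does not exceed the length of any crossing edge. For the reverse inequality on $D_k^*$: if $a,b$ lie in different components, then their $G$-path uses one of the deleted edges $e$, and deleting $e$ from $G$ separates $a$ from $b$, so the cut property gives $|ab|\ge|e|\ge\sigma_{k-1}$. Thus $\a(D_k^*)=\sigma_{k-1}=\a_k(X)$.

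\emph{Assembly.} For $k>m$ the claim is Theorem~\ref{thm:BigSympl} together with $\diam X=\sigma_1$; for $k=1$ it is $2d_{GH}(\l\D_1,X)=\diam X=\sigma_1$. For $2\le k\le m$ I substitute $d_k(X)=\sigma_k$ and $\a_k(X)=\sigma_{k-1}$ into Theorem~\ref{thm:GenFormulaSmallSympl}: every $D\in\cD_k(X)$ has $\diam D\ge\sigma_k$ and $\l-\a(D)\ge\l-\sigma_{k-1}$, so the infimum is $\ge\max\{\sigma_k,\l-\sigma_{k-1},\sigma_1-\l\}$, while $D_k^*$ attains it; hence $2d_{GH}(\l\D_k,X)=\max\{\sigma_1-\l,\sigma_k,\l-\sigma_{k-1}\}$. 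When $k=m$ this is $\max\{\sigma_1-\l,0,\l-\sigma_{m-1}\}$, and since $\max\{\sigma_1-\l,\l-\sigma_{m-1}\}\ge\tfrac{1}{2}(\sigma_1-\sigma_{m-1})\ge0$ the middle $0$ is absorbed, giving the stated expressions. The main obstacle is the diameter analysis: proving $\diam D_k^*=\sigma_k$ and that $D_k^*$ is diameter-minimal is exactly where ultrametricity (through the polygon inequality and the transitivity of $|xy|\le t$) is indispensable, whereas the $\a$-side holds for any finite metric space; the residual care is the bookkeeping of possible ties among the $\sigma_i$ when choosing which $k-1$ longest edges to delete.
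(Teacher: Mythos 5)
The paper you are working from contains no proof of Theorem~\ref{thm:ultra} at all: it is imported verbatim from~\cite{IvaTuzUltra}, so there is no in-paper argument to compare against line by line. Judged on its own, your proof is correct and self-contained modulo the results the paper does quote (Theorems~\ref{thm:BigSympl} and~\ref{thm:GenFormulaSmallSympl}). The two key identities check out. For $d_k(X)=\sigma_k$: the upper bound via the ultrametric polygon inequality along paths in $D_k^*$ is right, and the lower bound is sound because the minimax-path property of a minimal spanning tree (the $G$-path between $x,y$ minimizes the maximum edge length, hence has all edges of length $\le|xy|$) places each class of the equivalence relation $|xy|\le t$ inside one component of $G_{\le t}$, which has at least $k+1$ components when $t<\sigma_k$; note it is this property, rather than the cut property proper, that you are invoking there. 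For $\alpha(D)\le\sigma_{k-1}$ (any $D\in\cD_k$, any metric space) and $\alpha(D_k^*)\ge\sigma_{k-1}$ (genuine cut property of the MST) the counting and exchange arguments are standard and correct; this part is consistent with Theorem~\ref{thm:MSTSpec}, as your formula reduces to $\lambda-\sigma_{k-1}$ for $\lambda\ge2\diam X$. The essential insight --- that a single partition $D_k^*$ \emph{simultaneously} realizes $d_k(X)$ and $\alpha_k(X)$, collapsing the $\inf\max$ of Theorem~\ref{thm:GenFormulaSmallSympl} to a $\max$ of the separately optimal values, with ultrametricity needed only on the diameter side --- is exactly what makes the closed formula possible, and you identify it explicitly. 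The absorption of the redundant middle term at $k=m$ via $\max\{\sigma_1-\lambda,\lambda-\sigma_{m-1}\}\ge\tfrac12(\sigma_1-\sigma_{m-1})\ge0$ is correct.

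Two cosmetic points you should make explicit in a final write-up: first, you use $\diam X=\sigma_1$ throughout (writing $\sigma_1-\lambda$ for $\diam X-\lambda$); this does follow from your own diameter analysis at $k=1$ (polygon inequality gives $\diam X\le\sigma_1$, and edge lengths are distances, giving the reverse), but it deserves a sentence. Second, the $k=1$ case silently uses the paper's convention $\lambda\D_1=\D_1$, so that $2d_{GH}(\lambda\D_1,X)=\diam X$ independently of $\lambda$. Neither is a gap, and your remark that ties among the $\sigma_i$ only require fixing one choice of $k-1$ deleted edges realizing the top lengths is accurate --- every step of the argument depends only on the multiset of lengths.
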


\begin{cor}\label{cor:ultras}
Finite ultrametric spaces are indistinguishable if and only if their cardinalities are the same and their $\mst$-spectra coincide.
\end{cor}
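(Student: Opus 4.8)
The plan is to prove the two implications separately, using the general result of Corollary~\ref{cor:MSTSpect} for the forward direction and the explicit formula of Theorem~\ref{thm:ultra} for the backward direction.

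For the forward implication, suppose $X$ and $Y$ are indistinguishable finite ultrametric spaces. Since ultrametric spaces are in particular finite metric spaces, Corollary~\ref{cor:MSTSpect} applies verbatim and yields at once that $\#X=\#Y$ and $\s(X)=\s(Y)$. Thus this direction requires no use of the ultrametric hypothesis at all and is immediate.

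For the backward implication, the key observation is that Theorem~\ref{thm:ultra} expresses every distance $2d_{GH}(\l\D_k,X)$ as an explicit function of the cardinality $m=\#X$, the spectrum entries $\s_1,\ldots,\s_{m-1}$, and $\l$ alone. Concretely, assuming $\#X=\#Y=m$ and $\s(X)=\s(Y)$, I would substitute the common data into the four cases of the formula and read off that the right-hand side is literally the same expression for $X$ and for $Y$; hence $d_{GH}(\l\D_k,X)=d_{GH}(\l\D_k,Y)$ for every positive integer $k$ and every $\l\ge0$, i.e.\ for all simplexes of finite cardinality. It is precisely here that the ultrametric hypothesis is essential, since Theorem~\ref{thm:ultra} is what reduces the whole simplex-distance profile to the pair $\bigl(m,\s(X)\bigr)$.

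It then remains only to handle simplexes of infinite cardinality. Taking $k=1$ in Theorem~\ref{thm:ultra} and comparing with the identity $2d_{GH}(\D_1,X)=\diam X$ recorded in Section~\ref{sec:GH}, one gets $\diam X=\s_1$ for every finite ultrametric space; since $\s(X)=\s(Y)$ forces $\s_1(X)=\s_1(Y)$, the diameters coincide. Consequently, for any simplex $\l\D_n$ with $n$ an infinite cardinal we have $n>\#X$ and $n>\#Y$, so Theorem~\ref{thm:BigSympl} gives $2d_{GH}(\l\D_n,X)=\max\{\l,\diam X-\l\}=\max\{\l,\diam Y-\l\}=2d_{GH}(\l\D_n,Y)$. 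Combining the two cases shows that $X$ and $Y$ are indistinguishable. I expect no serious obstacle in the argument: the only point needing a moment's care is the bookkeeping over all cardinalities in the backward direction, ensuring the infinite-cardinality simplexes are not overlooked, which is dispatched by the diameter identity $\diam X=\s_1$ together with Theorem~\ref{thm:BigSympl}.
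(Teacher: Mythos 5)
Your proof is correct and takes essentially the same route the paper intends: the corollary is stated without proof as an immediate consequence of Theorem~\ref{thm:ultra}, with the ``only if'' direction already supplied by Corollary~\ref{cor:MSTSpect} and the ``if'' direction by substituting the common data $\bigl(m,\s(X)\bigr)$ into the formula of Theorem~\ref{thm:ultra}. Your extra care with simplexes of infinite cardinality, handled via the identity $\diam X=\s_1$ (read off from the $k=1$ case) and Theorem~\ref{thm:BigSympl}, is exactly the right bookkeeping and closes the one case the four-case formula does not literally cover.
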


\section{Distinguishability in Terms of Extreme Points}
\markright{\thesection.~Distinguishability in Terms of Extreme Points}
According to Theorem~\ref{thm:GenFormulaSmallSympl}, in order to exactly calculate the function $g_n(\l)=d_{GH}(\l\D_n,X)$ for a given bounded metric space $X$ and cardinal number $n\le\#X$, it is sufficient to know all pairs $\bigl(\a(D),\diam D\bigr)$, $D\in\cD_n(X)$. We consider these pairs as points of the plane on which the standard coordinates $(\a,d)$ are fixed. We denote the set of all such pairs by $\cAD_n(X)\ss\R^2$, and also put $h_{\a,d}(\l):=\max\{d,\,\l-\a\}$. Let us rewrite Theorem~\ref{thm:GenFormulaSmallSympl} in new notations, using the fact that the function $\diam X-\l$ does not depend on the partition $D$, and swapping $\inf$ and $\max$.

\begin{cor}\label{cor:GH-dist-alpha-diam-set}
Let $X$ be an arbitrary bounded metric space, and $n\le\#X$. Then
$$
2d_{GH}(\l\D_n,X)=\max\bigl\{\diam X-\l,\,\inf_{(\a,d)\in\cAD_n(X)}h_{\a,d}(\l)\bigr\}.
$$
\end{cor}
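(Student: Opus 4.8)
The plan is to derive Corollary~\ref{cor:GH-dist-alpha-diam-set} as a purely formal rewriting of Theorem~\ref{thm:GenFormulaSmallSympl}. Starting from
\[
2d_{GH}(\l\D_n,X)=\inf_{D\in\cD_n(X)}\max\Bigl\{\diam D,\l-\a(D),\diam X-\l\Bigr\},
\]
I would first observe that the term $\diam X-\l$ is constant in $D$, so it may be pulled outside the infimum. Concretely, for any family of quantities, $\inf_D\max\{c,\,f(D)\}=\max\{c,\,\inf_D f(D)\}$ whenever $c$ does not depend on $D$; applying this with $c=\diam X-\l$ and $f(D)=\max\{\diam D,\,\l-\a(D)\}$ yields
\[
2d_{GH}(\l\D_n,X)=\max\Bigl\{\diam X-\l,\,\inf_{D\in\cD_n(X)}\max\{\diam D,\,\l-\a(D)\}\Bigr\}.
\]

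Next I would translate the remaining inner expression into the $(\a,d)$-plane notation. By definition $\cAD_n(X)$ is the set of pairs $\bigl(\a(D),\diam D\bigr)$ ranging over $D\in\cD_n(X)$, and $h_{\a,d}(\l)=\max\{d,\,\l-\a\}$. Hence for each partition $D$ with associated pair $(\a,d)=\bigl(\a(D),\diam D\bigr)$ we have exactly $h_{\a,d}(\l)=\max\{\diam D,\,\l-\a(D)\}$. Taking the infimum over $D\in\cD_n(X)$ is therefore the same as taking the infimum of $h_{\a,d}(\l)$ over $(\a,d)\in\cAD_n(X)$, giving
\[
\inf_{D\in\cD_n(X)}\max\{\diam D,\,\l-\a(D)\}=\inf_{(\a,d)\in\cAD_n(X)}h_{\a,d}(\l),
\]
which substituted into the previous display is precisely the claimed formula.

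The only point requiring a moment's care is the passage from an infimum indexed by partitions to one indexed by the image set $\cAD_n(X)$: distinct partitions may share the same pair $(\a,d)$, so the map $D\mapsto\bigl(\a(D),\diam D\bigr)$ need not be injective. This causes no difficulty, however, since the infimum of a function over a set depends only on the set of values attained, and $h_{\a,d}(\l)$ depends on $D$ solely through the pair $(\a,d)$; collapsing repeated values leaves the infimum unchanged. I do not anticipate any genuine obstacle here --- the statement is an equivalent reformulation rather than a new assertion, and the proof is the short chain of identities above, with the swap of $\inf$ and $\max$ (justified by the constancy of $\diam X-\l$) being the single substantive step.
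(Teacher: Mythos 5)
Your proposal is correct and matches the paper's own derivation, which likewise obtains the corollary from Theorem~\ref{thm:GenFormulaSmallSympl} by noting that $\diam X-\l$ is independent of the partition $D$, swapping $\inf$ and $\max$, and reindexing the infimum over the image set $\cAD_n(X)$. Your explicit justification of the $\inf$/$\max$ swap and the remark about non-injectivity of $D\mapsto\bigl(\a(D),\diam D\bigr)$ are sound details that the paper leaves implicit.
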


\begin{rk}
For $\a$-connected metric spaces the formula from Corollary~\ref{cor:GH-dist-alpha-diam-set} coincides with the one from Corollary~\ref{cor:GHforAlpha0}
\end{rk}

We need the following notations: for arbitrary metric space $X$ and cardinal number $n\le\#X$, put:
\begin{gather*}
\a_n^-(X)=\inf_{D\in\cD_n(X)}\a(D),\qquad \a_n^+(X)=\sup_{D\in\cD_n(X)}\a(D),\\
d_n^-(X)=\inf_{D\in\cD_n(X)}\diam D,\qquad d_n^+(X)=\sup_{D\in\cD_n(X)}\diam D.
\end{gather*}
Notice that above the notations $\a_n(X)=\a_n^+(X)$ and $d_n(X)=d_n^-(X)$ are used.

\begin{rk}
For $n<\#X$ in the case of a finite metric space, and for $n\le\#X$ in the case of infinite metric space one has $d_n^+(X)=\diam X$.
\end{rk}

In what follows, it will be more convenient to work not with the set $\cAD_n(X)$, but with its closure $\bcAD_n(X)$. Notice that both $\cAD_n(X)$ and $\bcAD_n(X)$ lie in the rectangle formed by the intersection of two strips in the plane $\R^2(\a,d)$: the horizontal strip defined by the inequality $d_n^-(X)\le d\le d_n^+(X)$, and the vertical strip defined by the inequality $\a_n^-(X)\le\a\le\a_n^+(X)$. Thus, both of these sets are bounded, and $\bcAD_n(X)$ is compact.

\begin{cor}\label{cor:GH-dist-alpha-diam-set-closure}
Let $X$ be an arbitrary bounded metric space, and $n\le\#X$. Then
$$
2d_{GH}(\l\D_n,X)=\max\bigl\{\diam X-\l,\,\inf_{(\a,d)\in\bcAD_n(X)}h_{\a,d}(\l)\bigr\}.
$$
\end{cor}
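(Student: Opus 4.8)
The plan is to deduce this from Corollary~\ref{cor:GH-dist-alpha-diam-set}, which gives exactly the same formula but with $\cAD_n(X)$ in place of its closure $\bcAD_n(X)$. Since the summand $\diam X-\l$ does not involve the partition at all, it suffices to prove that
$$
\inf_{(\a,d)\in\cAD_n(X)}h_{\a,d}(\l)=\inf_{(\a,d)\in\bcAD_n(X)}h_{\a,d}(\l)
$$
for every fixed $\l\ge0$. This is a purely topological statement: the infimum of a continuous function over a set coincides with its infimum over the closure of that set.

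First I would record that, for fixed $\l$, the map $(\a,d)\mapsto h_{\a,d}(\l)=\max\{d,\l-\a\}$ is continuous on $\R^2$, being the maximum of the two affine functions $(\a,d)\mapsto d$ and $(\a,d)\mapsto\l-\a$. Next, the inequality $\inf_{(\a,d)\in\bcAD_n(X)}h_{\a,d}(\l)\le\inf_{(\a,d)\in\cAD_n(X)}h_{\a,d}(\l)$ is immediate from the inclusion $\cAD_n(X)\ss\bcAD_n(X)$, because the infimum over a larger set can only be smaller.

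For the reverse inequality I would fix an arbitrary point $(\a_0,d_0)\in\bcAD_n(X)$ and choose a sequence $(\a_k,d_k)\in\cAD_n(X)$ converging to it, which exists by the definition of the closure. Continuity of $h$ then yields $h_{\a_k,d_k}(\l)\to h_{\a_0,d_0}(\l)$, whence $\inf_{(\a,d)\in\cAD_n(X)}h_{\a,d}(\l)\le h_{\a_0,d_0}(\l)$. Taking the infimum over all $(\a_0,d_0)\in\bcAD_n(X)$ gives $\inf_{(\a,d)\in\cAD_n(X)}h_{\a,d}(\l)\le\inf_{(\a,d)\in\bcAD_n(X)}h_{\a,d}(\l)$, which completes the equality and hence the corollary.

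There is essentially no serious obstacle here; the only point that genuinely requires care is the continuity of $h_{\a,d}(\l)$ in $(\a,d)$, which I would verify explicitly as the maximum of two affine functions as above. As a bonus, since the excerpt already observes that $\bcAD_n(X)$ is compact, the infimum over $\bcAD_n(X)$ is actually attained, so one may even replace $\inf$ by $\min$ in the resulting formula; I would mention this but it is not needed for the statement itself.
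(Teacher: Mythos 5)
Your proof is correct and follows exactly the route the paper intends: the paper states this corollary without proof, the evident justification being precisely your observation that for fixed $\l$ the map $(\a,d)\mapsto h_{\a,d}(\l)=\max\{d,\l-\a\}$ is continuous, so its infimum over $\cAD_n(X)$ equals its infimum over the closure $\bcAD_n(X)$, and the term $\diam X-\l$ is unaffected. Your bonus remark that compactness of $\bcAD_n(X)$ lets one replace $\inf$ by $\min$ is also accurate and consistent with the paper's stated reason for passing to the closure.
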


Next, notice that for any pair $(\a,d)$ the graph of the function $y=h_{\a,d}(\l)$ is an angle with its vertex at the point $T_{\a,d}=(\a+d,d)$, one side of which is opposite to the abscissa axis, and the other is co-directed with the bisector of the first quadrant. Notice that for any $(\a,d),\,(\a',d')$ such that $\a\ge\a'$ and $d\le d'$ we have: $h_{\a,d}(\l)\le h_{\a',d'}(\l)$ for all $\l$. Thus, when calculating $d_{GH}(\l\D_n,X)$, such $(\a',d')\in\bcAD_n(X)$ for which there exists $(\a,d)\in\bcAD_n(X)$, $\a\ge\a'$, $d\le d'$, can be ignored.

A point $(\a,d)\in\bcAD_n(X)$ is called \emph{extremal\/} if for it there does not exist a point $(\a',d')\in\bcAD_n(X)$ different from it, for which $\a'\ge\a$ and $d'\le d$. The set of all extremal points from $\bcAD_n(X)$ is denoted by $\Ext_n(X)$. In the paper~\cite{GrigIvaTuzSimpDist} it is shown that the set $\Ext_n(X)$ is not empty, and the following result is true.

\begin{thm}\label{thm:extr}
Let $X$ be an arbitrary bounded metric sp[ace, and $n\le\#X$. Then
$$
2d_{GH}(\l\D_n,X)=\max\bigl\{\diam X-\l,\,\inf_{(\a,d)\in\Ext_n(X)}h_{\a,d}(\l)\bigr\}.
$$
\end{thm}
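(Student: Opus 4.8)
The plan is to derive Theorem~\ref{thm:extr} from Corollary~\ref{cor:GH-dist-alpha-diam-set-closure} by showing that the infimum of $h_{\a,d}(\l)$ over the full compact set $\bcAD_n(X)$ is already achieved on the subset $\Ext_n(X)$. Since $\Ext_n(X)\ss\bcAD_n(X)$, the inequality
$$
\inf_{(\a,d)\in\bcAD_n(X)}h_{\a,d}(\l)\le\inf_{(\a,d)\in\Ext_n(X)}h_{\a,d}(\l)
$$
is automatic, so the whole content lies in the reverse inequality. First I would fix an arbitrary $\l$ and an arbitrary point $(\a',d')\in\bcAD_n(X)$, and argue that there exists an extremal point $(\a,d)\in\Ext_n(X)$ dominating it in the sense $\a\ge\a'$, $d\le d'$; the monotonicity observation already recorded in the excerpt, namely that $\a\ge\a'$ and $d\le d'$ force $h_{\a,d}(\l)\le h_{\a',d'}(\l)$ for every $\l$, then gives $\inf_{\Ext_n(X)}h_{\a,d}(\l)\le h_{\a',d'}(\l)$. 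Taking the infimum over all $(\a',d')\in\bcAD_n(X)$ yields the reverse inequality and hence equality of the two infima, after which substitution into Corollary~\ref{cor:GH-dist-alpha-diam-set-closure} finishes the proof.

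The decisive step, and the one I expect to be the main obstacle, is the \emph{domination claim}: every point of the compact set $\bcAD_n(X)$ is dominated by some extremal point. This is exactly the kind of statement that can fail for general sets but holds here because of compactness, and it is presumably the content attributed to~\cite{GrigIvaTuzSimpDist} when the excerpt asserts that $\Ext_n(X)$ is nonempty. To prove it, I would define on $\bcAD_n(X)$ the partial order $(\a,d)\preceq(\a'',d'')$ meaning $\a''\ge\a$ and $d''\le d$ (i.e.\ larger $\a$, smaller $d$); the extremal points are precisely the $\preceq$-maximal elements. Given a starting point $(\a',d')$, I would consider the ``upper set'' $U=\{(\a,d)\in\bcAD_n(X):\a\ge\a',\ d\le d'\}$, which is the intersection of $\bcAD_n(X)$ with a closed quadrant, hence compact and nonempty. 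On $U$ I would find a maximal element by a standard extraction: maximize $\a-d$ (or lexicographically maximize $\a$ then minimize $d$) over the compact set $U$; a maximizer exists by compactness and continuity of the linear functional, and one checks it is $\preceq$-maximal in $U$, and therefore in all of $\bcAD_n(X)$, because any point dominating it would lie in $U$ and strictly increase $\a-d$.

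A point of care is that a maximizer of $\a-d$ need not be $\preceq$-maximal if the quadrant-corner geometry permits incomparable improvements; so rather than a single linear functional I may prefer the cleaner argument that lexicographic maximization (first $\a$, then $-d$) over a compact set produces a genuinely $\preceq$-maximal point. Concretely, let $\a^*=\max\{\a:(\a,d)\in U\}$, attained since $U$ is compact; then among points of $U$ with first coordinate $\a^*$, let $d^*=\min\{d:(\a^*,d)\in U\}$, again attained. The resulting point $(\a^*,d^*)$ lies in $\bcAD_n(X)$, dominates $(\a',d')$, and admits no strict dominator, hence belongs to $\Ext_n(X)$. This establishes the domination claim and completes the reduction. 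The remaining verification, that equality of the two infima is preserved under the outer $\max$ with the partition-independent term $\diam X-\l$, is immediate since that term is untouched by the replacement of $\bcAD_n(X)$ by $\Ext_n(X)$.
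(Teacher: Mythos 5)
Your proposal is correct and follows essentially the route the paper intends: the monotonicity of $h_{\a,d}$ under $\a\ge\a'$, $d\le d'$ is exactly the remark the paper records before the theorem, and the rest of the statement (nonemptiness of $\Ext_n(X)$ and the formula itself) is outsourced to~\cite{GrigIvaTuzSimpDist} rather than proved in the text. Your lexicographic extraction of a dominating extremal point from the compact set $\bcAD_n(X)$ correctly supplies the domination lemma the paper only cites (and, as a byproduct, the nonemptiness of $\Ext_n(X)$); note only that your caution about maximizing $\a-d$ is unneeded, since any strict dominator strictly increases $\a-d$, so the linear functional would have worked as well.
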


Notice that the closure of $\overline{\Ext}_n(X)$ is contained in $\bcAD_n(X)$ and contains $\Ext_n(X)$, so from the equality of the left-hand parts of the formulas in Theorem~\ref{thm:extr} and Corollary~\ref{cor:GH-dist-alpha-diam-set-closure} we obtain the following result.

\begin{cor}\label{cor:extr}
Let $X$ be an arbitrary bounded metric sp[ace, and $n\le\#X$. Then
$$
2d_{GH}(\l\D_n,X)=\max\bigl\{\diam X-\l,\,\inf_{(\a,d)\in\overline{\Ext}_n(X)}h_{\a,d}(\l)\bigr\}.
$$
\end{cor}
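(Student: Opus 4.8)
The plan is to deduce the corollary purely from the two facts already established, namely Theorem~\ref{thm:extr} and Corollary~\ref{cor:GH-dist-alpha-diam-set-closure}, together with the inclusions
$$
\Ext_n(X)\ss\overline{\Ext}_n(X)\ss\bcAD_n(X).
$$
The idea is that all three quantities
$$
\inf_{(\a,d)\in\Ext_n(X)}h_{\a,d}(\l),\quad
\inf_{(\a,d)\in\overline{\Ext}_n(X)}h_{\a,d}(\l),\quad
\inf_{(\a,d)\in\bcAD_n(X)}h_{\a,d}(\l)
$$
must coincide, since the first and third are already known to give the same value of $2d_{GH}(\l\D_n,X)-\max\{\dots\}$ by the two cited results, and the middle infimum is trapped between them by monotonicity of the infimum under set inclusion. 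Indeed, taking an infimum over a larger set can only decrease it, so the inclusions above give
$$
\inf_{\bcAD_n(X)}h_{\a,d}(\l)\;\le\;\inf_{\overline{\Ext}_n(X)}h_{\a,d}(\l)\;\le\;\inf_{\Ext_n(X)}h_{\a,d}(\l).
$$
Once I show the two extreme infima are equal, the middle one is squeezed to the common value.

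First I would fix $\l\ge0$ and invoke Theorem~\ref{thm:extr} and Corollary~\ref{cor:GH-dist-alpha-diam-set-closure}: both express $2d_{GH}(\l\D_n,X)$ as $\max\bigl\{\diam X-\l,\,I\bigr\}$, where $I$ is the infimum of $h_{\a,d}(\l)$ over $\Ext_n(X)$ in the first case and over $\bcAD_n(X)$ in the second. Equating the left-hand sides gives
$$
\max\Bigl\{\diam X-\l,\,\inf_{\Ext_n(X)}h_{\a,d}(\l)\Bigr\}
=\max\Bigl\{\diam X-\l,\,\inf_{\bcAD_n(X)}h_{\a,d}(\l)\Bigr\}.
$$
Combining this with the chain of inequalities from the inclusions, I then substitute $\overline{\Ext}_n(X)$ into the max and conclude
$$
\max\Bigl\{\diam X-\l,\,\inf_{\overline{\Ext}_n(X)}h_{\a,d}(\l)\Bigr\}
=\max\Bigl\{\diam X-\l,\,\inf_{\bcAD_n(X)}h_{\a,d}(\l)\Bigr\}
=2d_{GH}(\l\D_n,X),
$$
which is exactly the asserted formula.

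The only genuine content is the monotonicity of the infimum under set inclusion, which is immediate, and the observation that $h_{\a,d}(\l)$ is the \emph{same} function being minimized in all three cases, so no continuity or compactness argument about $h$ itself is needed. The one point that deserves care is verifying that taking the maximum with the partition-independent term $\diam X-\l$ does not destroy the squeeze: since $\max\{c,\cdot\}$ is a monotone (non-decreasing) function of its second argument, the inequalities between the three infima are preserved under $\max\{\diam X-\l,\cdot\}$, so the equality of the outer two maxima forces equality of the middle one as well. I expect no serious obstacle here; the result is essentially a formal consequence of the two preceding statements, and the main task is simply to write the inclusion-and-squeeze argument cleanly.
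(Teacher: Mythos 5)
Your proposal is correct and is essentially the paper's own proof: the paper deduces the corollary in one line from the inclusions $\Ext_n(X)\ss\overline{\Ext}_n(X)\ss\bcAD_n(X)$ together with the equality of the left-hand sides of Theorem~\ref{thm:extr} and Corollary~\ref{cor:GH-dist-alpha-diam-set-closure}, which is exactly your inclusion-and-squeeze argument. One small caveat: your intermediate claim that the three infima themselves ``must coincide'' need not hold (when they all lie below $\diam X-\l$ they can differ), but this is harmless, since your final step correctly squeezes the values of $\max\{\diam X-\l,\cdot\}$ rather than the raw infima, using the monotonicity of that function.
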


For a bounded metric space $X$, let $\Pi_X^+\ss\R^2(\a,d)$ denote the half-plane defined by $\a+2d\ge\diam X$, and let $\Pi_X^-\ss\R^2(\a,d)$ denote the half-plane defined by $\a+2d\le\diam X$. Let us define a subset $M_n(X)\ss\R^2$. If $\Ext_n(X)\ss\Pi_X^+$, then we put $M_n(X)=\Ext_n(X)$. If $\Ext_n(X)\ss\Pi_X^-$, then $M_n(X)=\{A_+\}$, where
$$
A_+=\Big(\a_n^+(X),\frac{\diam X-\a_n^+(X)}{2}\Big).
$$
In the remaining case, we denote by $(\a_\pm,d_\pm)$ the point of the set $\overline{\Ext}_n(X)\cap\Pi_X^\pm$ closest to the straight line $\a+2d=\diam X$ (note that in the case under consideration both of these sets are non-empty). If the point $(\a_-,d_+)$ lies in $\Pi_X^-$, then put $M_n(X)=\overline{\Ext}_n(X)\cap\Pi_X^+$. Otherwise $M_n(X)=\overline{\Ext}_n(X)\cap\Pi_X^+\cup\big\{A\big\}$, where
$$
A=\Big(\a_-,\frac{\diam X-\a_-}{2}\Big).
$$

\begin{cor}\label{cor:disting_bounded_Ext}
Bounded metric spaces $X$ and $Y$ of the same cardinality are indistinguishable, if and only if their diameters are the same and the sets $M_n(X)$ and $M_n(Y)$ coincide for each $n\le\#X=\#Y$.
\end{cor}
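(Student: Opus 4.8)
The plan is to reduce the indistinguishability of $X$ and $Y$ to an equality of certain ``reduced'' data extracted from the functions $g_n^X(\l)=d_{GH}(\l\D_n,X)$ and $g_n^Y(\l)=d_{GH}(\l\D_n,Y)$, and then to identify this reduced data with the sets $M_n(X)$ and $M_n(Y)$. By definition, $X$ and $Y$ are indistinguishable iff $g_n^X\equiv g_n^Y$ for every cardinal $n$; by Corollary~\ref{cor:diam} equal diameters are necessary, and by the Remark following Corollary~\ref{cor:BigLambda} only simplexes of cardinality $n\le\#X=\#Y$ can distinguish them once diameters agree. So the whole problem localizes to showing, for each fixed $n\le\#X=\#Y$ and fixed common diameter $D:=\diam X=\diam Y$, that the equality of the functions $\l\mapsto g_n^X(\l)$ and $\l\mapsto g_n^Y(\l)$ on $[0,\infty)$ is equivalent to $M_n(X)=M_n(Y)$.

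The key computation is to understand the function $\l\mapsto\inf_{(\a,d)\in\overline{\Ext}_n(X)}h_{\a,d}(\l)$ appearing in Corollary~\ref{cor:extr}, and to see precisely which extremal points actually contribute to it once the outer $\max$ with $D-\l$ is taken. Geometrically each $h_{\a,d}$ is the angle with vertex $T_{\a,d}=(\a+d,d)$ described in the text, so the infimum over a compact set of such angles is the lower envelope of a family of angles, hence itself a piecewise-linear convex-type function whose graph is determined by the ``lower-left boundary'' of $\overline{\Ext}_n(X)$. First I would show that taking the max with the line $y=D-\l$ amounts to clipping this envelope: the line $D-\l$ meets the region, and the half-planes $\Pi_X^{\pm}$ (defined by $\a+2d\gtrless D$) record exactly whether the vertex $T_{\a,d}$ of a given angle lies above or below the crossing point of that angle's co-bisector side with the line $y=D-\l$. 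This is why the construction of $M_n(X)$ splits into the three cases $\Ext_n(X)\ss\Pi_X^+$, $\Ext_n(X)\ss\Pi_X^-$, and the mixed case: in the first case every relevant vertex survives; in the second the only information left after clipping is a single corner point $A_+$ determined by $\a_n^+(X)$ and $D$; and in the mixed case the envelope is governed by the part of $\overline{\Ext}_n(X)$ in $\Pi_X^+$ together with, possibly, one extra corner $A$ produced where the line $\a+2d=D$ cuts the envelope.

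With this dictionary in place, the heart of the argument is to prove that the clipped lower envelope (that is, the actual function $g_n^X$) determines, and is determined by, the set $M_n(X)$. The forward direction is the easier half: from the explicit three-case definition one checks directly that $M_n(X)$ can be read off from the graph of $g_n^X$ as the set of vertices of its breakpoints (together with the single extra corner where relevant), so $g_n^X\equiv g_n^Y$ forces $M_n(X)=M_n(Y)$. The converse requires showing that $M_n(X)$ carries enough information to reconstruct the whole envelope: one verifies that the discarded points, namely those in $\overline{\Ext}_n(X)\cap\Pi_X^-$ in the mixed case (or all non-extremal and ``dominated'' points throughout), never affect the value of $\inf_{(\a,d)}h_{\a,d}(\l)$ after the max with $D-\l$ has been imposed, precisely because for such $(\a,d)$ one has $h_{\a,d}(\l)\le D-\l$ on the relevant range of $\l$. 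Assembling both directions across all $n$ and invoking Corollary~\ref{cor:diam} and the cardinality Remark yields the stated equivalence.

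I expect the main obstacle to be the mixed case: one must argue carefully that replacing the lower part $\overline{\Ext}_n(X)\cap\Pi_X^-$ by the single point $A$ (or by nothing, according to whether $(\a_-,d_+)$ lies in $\Pi_X^-$) does not change the function $g_n^X$, and conversely that no other reduction is possible without losing information. This is a delicate piece of planar envelope geometry: it requires tracking the position of the crossing point of the line $y=D-\l$ relative to the angle vertices $T_{\a,d}$, and verifying that the extremal points closest to the dividing line $\a+2d=D$ on each side are exactly the ones that control the transition of the envelope from being governed by $D-\l$ to being governed by the $h_{\a,d}$ terms. The condition ``$(\a_-,d_+)$ lies in $\Pi_X^-$'' is precisely the degeneracy test for whether the auxiliary corner $A$ is needed, and confirming that this test correctly captures all cases is where the routine-looking casework hides the real content.
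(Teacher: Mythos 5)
Your proposal takes essentially the same route as the paper's own proof: both reduce indistinguishability to the coincidence of the piecewise-linear graphs of $f_n(\l)=2d_{GH}(\l\D_n,X)$ via Theorem~\ref{thm:extr}, and both then run the same three-case analysis on the position of $\overline{\Ext}_n(X)$ relative to the half-planes $\Pi_X^\pm$, showing that the lower envelope of the angles $h_{\a,d}$ clipped by the line $y=\diam X-\l$ determines, and is reconstructed from, $M_n(X)$, with the test on whether $(\a_-,d_+)$ lies in $\Pi_X^-$ deciding whether the auxiliary corner $A$ enters. The mixed-case envelope geometry you flag as the crux is exactly the part the paper works out explicitly via the vertices $B_\pm=(\a_\pm+d_\pm,d_\pm)$, so your plan matches the published argument in both structure and substance.
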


\begin{proof}
Spaces $X$ and $Y$ are indistinguishable if and only if the functions $f_n(\l)=2d_{GH}(\l\D_n,X)$ and $g_n(\l)=2d_{GH}(\l\D_n,Y)$ are equal for all $n\le\#X=\#Y$ and $\l\ge0$. Let us consider their graphs on the plane with coordinates $(\l,y)$. It follows from Theorem~\ref{thm:extr} that the graph of the function $f_n(\l)=2d_{GH}(\l\D_n,X)$ is a broken line, one link of which is a segment of the straight line $y=\diam X-\l$, and the remaining links lie on the rays of the graphs of the functions $h_{\a,d}$, where $(\a,d)\in\overline{\Ext}_n(X)$. Obviously, functions $f_n$ and $g_n$ are equal if and only if their graphs coincide, and the latter takes place if and only if the vertex sets of the corresponding broken lines coincide.

Let us describe the graph of function $f_n(\l)$. Denote by $L_{\a,d}$ the graph of function $h_{\a,d}$. Recall that this is a broken line consisting of two rays with a vertex at the point with coordinates $(d+\a,d)$. Since in the formula from Theorem~\ref{thm:extr} the maximum of the greatest lower bound of functions $h_{\a,d}$ and function $\diam X-\l$ is taken, then on the graph of function $f_n(\l)$ there are those and only those extremal points that lie in the half-plane $\Pi_X^+$. If all extreme points fall in this half-plane, then the set $M_n(X)=\Ext_n(X)$ completely determines this graph. If $\Ext_n(X)\ss\Pi_X^-$, then the graph of the function $f_n(\l)$ consists of two links lying on the straight lines $y=\diam X-\l$ and $y=\l-\a_n^+(X)$. These two straight lines on the plane with coordinates $(\l,y)$ intersect at the point with coordinates
$$
\Big(\frac{\diam X+\a_n^+(X)}{2},\frac{\diam X-\a_n^+(X)}{2}\Big),
$$
that corresponds to the point
$$
A_+=\Big(\a_n^+(X),\frac{\diam X-\a_n^+(X)}{2}\Big)
$$
at the plane with coordinates $(\a,d)$, $\a=\l-y$, $d=y$. Thus, in this case the set $M_n(X)=\{A_+\}$ completely defines the graph of the function $f_n(\l)$ and conversely, $M_n(X)$ can be restored from the graph.

Finally, let the extreme points be located in both half-planes. In this situation, the lower envelope $E_L$ of the family of broken lines $\{L_{\a,d}\}$ intersects the line $y=\diam X-\l$ at some point. To find this point, consider the vertices of the envelope $E_L$ closest to this line. These are the points $B_\pm=(\a_\pm+d_\pm,d_\pm)\in\R^2(\l,y)$, where $(\a_\pm,d_\pm)\in\Pi_X^\pm$. The part of $E_L$ between these two points is a broken line consisting of a horizontal straight segment with endpoint at $B_+$ and an inclined straight segment with endpoint at $B_-$. The common vertex of these straight segments is $(\a_-+d_+,d_+)$. On the plane $\R^2(\a,d)$ this point has the form $(\a_-,d_+)$. If this point lies in $\Pi_X^-$, then the line $y=\diam X-\l$ intersects the horizontal segment of the lower envelope and the coordinates of the intersection point are calculated through $(\a_+,d_+)\in\Pi_X^+$ and $\diam X$. If the point $(\a_-,d_+)$ lies in $\Pi_X^-$, then the line $y=\diam X-\l$ intersects the inclined segment of the envelope $E_L$, and the coordinates of the intersection point have the form
$$
\Big(\frac{\diam X+\a_-}{2},\frac{\diam X-\a_-}{2}\Big),
$$
that corresponds to the point
$$
A=\Big(\a_-,\frac{\diam X-\a_-}{2}\Big).
$$
in the plane $\R^2(\a,d)$. Thus, in this case also the set $M_n(X)$ completely determines the graph of the function $f_n(\l)$ and, conversely, is can be reconstructed from the graph. The corollary is proved.
\end{proof}

Notice that indistinguishable bounded metric spaces could have different extreme sets.

\begin{examp}
Let $X$ be a subset of the plane consisting of four equal straight segments of length $s$ lying on the same straight line $\ell$, and $\dl$ be the distance between the nearest endpoints of adjacent segments. Let us calculate the distance $d_{GH}(X,\l\D_4)$. It is clear that $\diam X=4s+3\dl$, $d_4^+(X)=\diam X$, $d_4^-(X)=s$, $\a_4^-(X)=0$, $\a_4^+(X)=\dl$, there is one extremal point with coordinates $(\dl,s)$ on the plane $\R^2(\a,d)$ or $(s+\dl,s)$ in coordinates $(\l,y)$. This point lies in the half-plane $\Pi_X^-$, and the set $M_4(X)$ consists of a single point $A_+(X)$. Let us change the space $X$, preserving the diameter and $\a_4^+$, and changing $d_4^-$. To do this, let us rotate the segments with respect to their midpoints and extend them so that their ends remain on straight lines perpendicular to the straight line $\ell$ and passing through the endpoints of the segments forming the space $X$, see Fig.~\ref{fig:X_Y}. We denote the resulting space by $Y$.

\ig{X_Y}{0.15}{fig:X_Y}{Indistinguishable metric spaces $X$ and $Y$ with different sets $\Ext_4$.}

Then $\diam Y=\diam X$, $d_4^+(Y)=\diam Y$, $d_4^-(Y)=S>s$, $\a_4^-(Y)=0$, $\a_4^+(Y)=\dl$, the space $Y$ has one extremal point with coordinates $(\dl,S)$ on the plane $\R^2(\a,d)$ or $(S+\dl,S)$ in coordinates $(\l,y)$. For $S<\dl+2s$ this point lies in the half-plane $\Pi_Y^-=\Pi_X^-$, and the set $M_4(Y)$ consists of one point $A_+(Y)=A_+(X)$. Thus, in the notation of the proof of Corollary~\ref{cor:disting_bounded_Ext}, the graphs of the functions $f_4(\l)$ and $g_4(\l)$ coincide, and the sets $\Ext_4(X)$ and $\Ext_4(Y)$ are different, see Fig.~\ref{fig:graph_f_g}.

\ig{graph_f_g}{0.25}{fig:graph_f_g}{The graphs of the functions $f_4(\l)=2d_{GH}(\l\D_4,X)$ and $g_4(\l)=2d_{GH}(\l\D_4,Y)$ coincide.}
\end{examp}

Combining the Corollaries~\ref{cor:MSTSpect} and~\ref{cor:disting_bounded_Ext}, we obtain the following indistinguishability criterion for finite metric spaces.

\begin{cor}\label{cor:finite_disting_bounded_Ext}
Finite metric spaces $X$ and $Y$ are indistinguishable, if and only if their diameters are equal and the subsets $M_n(X)$ and $M_n(Y)$ coincide for each  $n\le\#X=\#Y$.
\end{cor}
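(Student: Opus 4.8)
The plan is to reduce the statement to the two corollaries already at hand, the only genuinely new content being the removal of the a~priori equal-cardinality hypothesis that appears in Corollary~\ref{cor:disting_bounded_Ext}. I would treat the two implications of the ``if and only if'' separately, and in each case arrange matters so that Corollary~\ref{cor:disting_bounded_Ext} becomes directly applicable.

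For the forward implication, I would assume that the finite metric spaces $X$ and $Y$ are indistinguishable. The key observation is that Corollary~\ref{cor:MSTSpect} already forces $\#X=\#Y$ (and $\diam X=\diam Y$) for indistinguishable finite spaces; this supplies exactly the equal-cardinality condition that Corollary~\ref{cor:disting_bounded_Ext} takes as a standing hypothesis. Once $\#X=\#Y$ is in hand, $X$ and $Y$ are bounded (being finite) metric spaces of the same cardinality, so Corollary~\ref{cor:disting_bounded_Ext} applies verbatim and yields $M_n(X)=M_n(Y)$ for every $n\le\#X=\#Y$, while the equality of diameters has already been recorded.

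For the backward implication, the hypothesis that $M_n(X)$ and $M_n(Y)$ coincide for each $n\le\#X=\#Y$ already carries the equality $\#X=\#Y$ within it, and the diameters are assumed equal. Thus the entire hypothesis of Corollary~\ref{cor:disting_bounded_Ext} is satisfied, and its ``if'' direction gives at once that $X$ and $Y$ are indistinguishable.

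The main, and essentially only, point to watch is the handling of the equal-cardinality clause. Corollary~\ref{cor:disting_bounded_Ext} is stated only for spaces of the same cardinality, so the real content of the present corollary is that, for finite spaces, this clause need not be imposed separately: it is produced automatically by indistinguishability through the $\mst$-spectrum argument of Corollary~\ref{cor:MSTSpect}. No estimates or geometric analysis beyond those two results are required, and the proof is a direct composition of them.
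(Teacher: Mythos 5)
Your proof is correct and matches the paper's argument exactly: the paper likewise obtains this corollary by combining Corollary~\ref{cor:MSTSpect} (which forces equal cardinalities and diameters for indistinguishable finite spaces) with Corollary~\ref{cor:disting_bounded_Ext}. Your explicit attention to how the equal-cardinality clause is discharged in each direction is precisely the content the paper leaves implicit in its one-line proof.
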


\section{Distinguishability and Graph Chromatic Number}
\markright{\thesection.~Distinguishability and Graph Chromatic Number}
In paper~\cite{IvaTuzTwoDist} a connection was found between the Gromov--Hausdorff distances from metric spaces with two nonzero distances to simplexes and the chromatic numbers of graphs. Recall that the \emph{chromatic number of a simple graph $G$} is the smallest number of colors in which the vertices of this graph can be colored so that adjacent vertices are colored in different colors. We denote this number by $\g(G)$.

\begin{thm}[\cite{IvaTuzTwoDist}]\label{thm:ChromNum}
Let $G=(V,E)$ be an arbitrary finite simple graph, where $V$ stands for its vertex set and $E$ stands for its edge set. Fix arbitrary reals $a$ and $b$ such that $0<a<b\le2a$ and define a metric on $V$ as follows\/\rom: put the distances between each adjacent vertices of $G$ to be equal to $b$, and put all the remaining non-zero distances to be equal to $a$. If $m$ is the largest positive number such that $2d_{GH}(a\D_m,V)=b$, then $\g(G)=m+1$.
\end{thm}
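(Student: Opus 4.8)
The plan is to apply Theorem~\ref{thm:GenFormulaSmallSympl} to $X=V$ with $\l=a$ and translate the resulting partition-optimization into a statement about proper colourings of $G$. First I would record two elementary facts: the prescribed distances satisfy the triangle inequality because $b\le 2a$, so $V$ is genuinely a metric space; and, assuming $G$ has at least one edge (which must hold for a positive $m$ with the stated property to exist), $\diam V=b$, whence $\g(G)\ge2$. I would then specialise the formula of Theorem~\ref{thm:GenFormulaSmallSympl} to $\l=a$: since every non-zero cross-distance is at least $a$ we have $\a(D)\ge a$, so $a-\a(D)\le0<b-a$, while $b-a\le a$ because $b\le2a$. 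Hence the three-term maximum collapses and $2d_{GH}(a\D_n,V)=\inf_{D\in\cD_n(V)}\max\{\diam D,\,b-a\}$.

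The combinatorial heart of the argument is the observation that for a partition $D\in\cD_n(V)$ the quantity $\diam D$ takes only the values $0$, $a$, or $b$, and that $\diam D=b$ holds exactly when some block of $D$ contains two adjacent vertices of $G$, i.e.\ when the partition is \emph{not} a proper colouring. Consequently a partition of $V$ into exactly $n$ blocks with $\diam D\le a$ (no monochromatic edge) exists if and only if $G$ admits a proper colouring with exactly $n$ colours, which for $n\le\#V$ is equivalent to $\g(G)\le n$. Using $\max\{b,b-a\}=b$, $\max\{a,b-a\}=a$, and $\max\{0,b-a\}=b-a$, I would then evaluate the infimum in each range of $n$: for $1\le n<\g(G)$ every $n$-block partition has a monochromatic edge, so the value equals $b$; for $\g(G)\le n<\#V$ a proper colouring with a non-singleton (hence independent, diameter-$a$) block exists while no all-singleton partition does, so the value equals $a$; for $n=\#V$ only the all-singleton partition remains and the value equals $b-a$; and for $n>\#V$ Theorem~\ref{thm:BigSympl} gives $\max\{a,b-a\}=a$.

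Putting these together, $2d_{GH}(a\D_n,V)=b$ holds precisely for $1\le n\le\g(G)-1$, because in every other range the value ($a$, $b-a$, or again $a$) is strictly smaller than $b$, where $a<b$ and $a>0$ are used. Therefore the largest positive $n$ with $2d_{GH}(a\D_n,V)=b$ is $m=\g(G)-1$, i.e.\ $\g(G)=m+1$, as claimed. I expect the main obstacle to lie not in the colouring dictionary but in pinning down the infimum exactly: one must argue that each candidate value is genuinely attained — in particular that for $\g(G)\le n<\#V$ one cannot beat $a$, which rests on the impossibility of an all-singleton partition when $n<\#V$ — and check that the boundary case $b=2a$, where $b-a=a$, does not spuriously produce a value equal to $b$.
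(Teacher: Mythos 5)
Your proof is correct: the paper itself states Theorem~\ref{thm:ChromNum} without proof (it is quoted from~\cite{IvaTuzTwoDist}), and your derivation is the natural one via the machinery the paper does develop, namely specializing Theorem~\ref{thm:GenFormulaSmallSympl} at $\l=a$ (using $\a(D)\ge a$ and $b\le2a$ to collapse the three-term maximum) together with Theorem~\ref{thm:BigSympl} for $n>\#V$. All the delicate points are handled: the dictionary between $n$-block partitions with $\diam D\le a$ and proper $n$-colourings, attainment of the infimum in the range $\g(G)\le n<\#V$, the all-singleton case $n=\#V$ giving $b-a<b$, and the boundary $b=2a$ producing no spurious value $b$.
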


For $0<a<b\le2a$, a metric space in which nonzero distances take only values ??$a$ or $b$ is called an \emph{$(a,b)$-space}. Let $X$ be an arbitrary $(a,b)$-space. Construct a graph $G_X=(V,E)$ by reversing the procedure from Theorem~\ref{thm:ChromNum}, namely, we set $V=X$ and connect vertices $x,y\in X$ by an edge if and only if $|xy|=b$. From Corollary~\ref{cor:MSTSpect} and Theorem~\ref{thm:ChromNum}, we obtain the following result.

\begin{cor}
Let $X_i$, $i=1,\,2$, be some $(a_i,b_i)$-spaces and let $0<a_1<b_1\le2a_1$, $0<a_2<b_2\le2a_2$. Assume that $X_1$ and $X_2$ are indistinguishable. Then they have the same cardinality, $a_1=a_2$, $b_1=b_2$, and the chromatic numbers of the graphs $G_{X_1}$ and $G_{X_2}$ coincide.
\end{cor}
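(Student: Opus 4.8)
The plan is to extract the three numerical invariants $\#X_i$, $a_i$, $b_i$ from data that indistinguishability already controls --- the diameter and the $\mst$-spectrum --- and then to deduce the equality of chromatic numbers directly from Theorem~\ref{thm:ChromNum}. Throughout one works with finite $X_i$ (this is the setting of Theorem~\ref{thm:ChromNum}) and uses that, being genuine spaces with two nonzero distances, each $X_i$ realizes both values $a_i$ and $b_i$. By definition, indistinguishability means $d_{GH}(\l\D_n,X_1)=d_{GH}(\l\D_n,X_2)$ for every cardinal $n$ and every $\l\ge0$. Hence, by Corollary~\ref{cor:MSTSpect}, the spaces share a common cardinality $m:=\#X_1=\#X_2$ and a common $\mst$-spectrum $\s(X_1)=\s(X_2)$, and by Corollary~\ref{cor:diam} a common diameter; this already gives the first assertion.

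First I would pin down $b_i$ and $a_i$. Since $a_i<b_i$ and $X_i$ contains a pair at distance $b_i$, we have $\diam X_i=b_i$, so the common diameter forces $b_1=b_2$. To recover $a_i$ I would describe the $\mst$-spectrum of such a space explicitly. Running Kruskal's algorithm, every edge of length $a_i$ is examined before any edge of length $b_i$; thus a minimal spanning tree first builds a spanning forest of the graph $H_i$ on the vertex set $X_i$ whose edges are the pairs at distance $a_i$, and then joins the components of $H_i$ by edges of length $b_i$. If $H_i$ has $c_i$ connected components, the spectrum is therefore
$$
\s(X_i)=(\underbrace{b_i,\dots,b_i}_{c_i-1},\underbrace{a_i,\dots,a_i}_{m-c_i}).
$$
Because $X_i$ realizes the distance $a_i$, the graph $H_i$ has at least one edge, so $c_i\le m-1$ and the smallest entry of the spectrum equals $a_i$; the common spectrum then yields $a_1=a_2$. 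Write $a:=a_1=a_2$ and $b:=b_1=b_2$.

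Finally I would compare chromatic numbers. By Theorem~\ref{thm:ChromNum} applied to the graph $G_{X_i}$ built on the $(a,b)$-space $X_i$, one has $\g(G_{X_i})=m_i+1$, where $m_i$ is the largest positive integer with $2d_{GH}(a\D_{m_i},X_i)=b$. Since $X_1$ and $X_2$ are indistinguishable, the functions $n\mapsto 2d_{GH}(a\D_n,X_1)$ and $n\mapsto 2d_{GH}(a\D_n,X_2)$ coincide, so the largest $n$ at which this value equals $b$ is the same for both spaces, i.e.\ $m_1=m_2$. Hence $\g(G_{X_1})=m_1+1=m_2+1=\g(G_{X_2})$, which finishes the argument.

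The step I expect to be the main obstacle is the determination of $a_i$ from the $\mst$-spectrum: proving the displayed form of $\s(X_i)$ and verifying that $a_i$ is its minimal entry under the standing assumption that both distances occur. The degenerate cases in which one of the two distances is absent (so that $a_i$ or $b_i$ ceases to be intrinsic to $X_i$, e.g.\ a two-point space $b\D_2$ regarded as an $(a,b)$-space for many different $a$) must be excluded; this is precisely what ``two nonzero distances'' guarantees. Once $a_1=a_2$ and $b_1=b_2$ are secured, the chromatic-number equality is an immediate consequence of indistinguishability and Theorem~\ref{thm:ChromNum}.
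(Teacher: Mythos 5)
Your proposal is correct and takes essentially the same route as the paper, which states this corollary as a direct combination of Corollary~\ref{cor:MSTSpect} and Theorem~\ref{thm:ChromNum}. You have merely made explicit the details the paper leaves implicit --- recovering $b_i$ as the common diameter, recovering $a_i$ as the minimal entry of the common $\mst$-spectrum via the Kruskal description (with the correct caveat that both distances must actually occur in each $X_i$), and comparing the largest $n$ with $2d_{GH}(a\D_n,X_i)=b$ --- so there is nothing to object to.
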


\section{Distinguishability and Graph Clique Covering Number}
\markright{\thesection.~Distinguishability and Graph Clique Covering Number}
In the same paper~\cite{IvaTuzTwoDist} a connection is found between the Gromov--Hausdorff distances from metric spaces with two nonzero distances to simplexes and the clique covering numbers of graphs. The point is that the clique covering number and the chromatic number are dual, see, for example~\cite{Emel}. Let us recall the corresponding definitions.

A \emph{clique\/} in an arbitrary simple graph $G=(V,E)$ is each of its subgraphs in which every two vertices are connected by an edge. Notice that every single-vertex subgraph is a clique by definition. It is clear that the family of vertices of all cliques covers $V$. The smallest number of cliques whose vertices cover $V$ is called the \emph{clique covering number}. We denote this number by $\theta(G)$.

For a simple graph $G$, let $G'$ stand for its \emph{dual\/} graph, i.e. the graph that has the same set of vertices and the complementary set of edges (two vertices in $G'$ are connected by an edge if and only if they are not connected by an edge in $G$).

The following fact is well known.

\begin{prop}
For each simple graph $G$ the equality $\theta(G)=\g(G')$ holds.
\end{prop}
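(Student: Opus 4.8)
The plan is to reduce the identity to the elementary observation that a vertex subset is a clique in $G$ precisely when it is an independent set in $G'$. Indeed, by the definition of the dual graph, two distinct vertices are adjacent in $G$ if and only if they are non-adjacent in $G'$; hence a set $S\ss V$ in which every two vertices are joined by an edge of $G$ is exactly a set in which no two vertices are joined by an edge of $G'$, that is, an independent set of $G'$, and conversely. Single-vertex sets are simultaneously cliques of $G$ and independent sets of $G'$, consistent with the convention that every one-vertex subgraph counts as a clique.

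Next I would translate each of the two invariants into the language of vertex partitions. For the clique covering number I would first note that a clique cover realizing $\theta(G)$ may be assumed to be a \emph{partition}: since every subset of a clique is again a clique, from any cover of $V$ by cliques one can delete repeated vertices and obtain a partition into cliques using no more pieces. Thus $\theta(G)$ equals the least number of parts in a partition of $V$ into cliques of $G$. Symmetrically, a proper coloring of $G'$ with $k$ colors is the same datum as a partition of $V$ into $k$ color classes, each of which is, by properness, an independent set of $G'$; hence $\g(G')$ equals the least number of parts in a partition of $V$ into independent sets of $G'$.

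Combining these two reformulations with the opening observation finishes the argument: partitions of $V$ into cliques of $G$ are literally the same objects as partitions of $V$ into independent sets of $G'$, so the two minima are taken over the identical family and therefore coincide, giving $\theta(G)=\g(G')$. There is essentially no hard step in this proof; the only point deserving a word of care is the passage from an arbitrary clique cover to a clique partition (and the matching remark that the color classes of a proper coloring automatically partition $V$), since it is this refinement that guarantees we are minimizing over the same set of configurations on both sides. As $G$ is finite all the minima involved are attained, so the asserted equality is an equality of finite integers.
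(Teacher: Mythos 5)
Your proof is correct: the observation that cliques of $G$ are exactly the independent sets of $G'$, together with the reduction of clique covers to clique partitions and of proper colorings to partitions into independent sets, is the standard argument for this duality. The paper itself states the proposition without proof (citing it as a well-known fact), so there is no divergence to report; your write-up supplies precisely the routine verification the paper omits, including the one point worth care --- passing from a cover to a partition without increasing the number of parts.
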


Theorem~\ref{thm:ChromNum} can be reformulated in the following form.

\begin{thm}[\cite{IvaTuzTwoDist}]\label{thm:clique}
Let $G=(V,E)$ be an arbitrary finite simple graph, where $V$ is its vertex set, and $E$ is its edge set. Fix arbitrary  $0<a<b\le2a$ and define a metric on $V$ as follows\/\rom: put the distances between adjacent vertices to be equal to $a$, and put all the remaining nonzero distances to be equal to $b$. If $n$ is the largest positive integer such that  $2d_{GH}(a\D_n,V)=b$, then $\theta(G)=n+1$.
\end{thm}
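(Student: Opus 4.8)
The plan is to derive Theorem~\ref{thm:clique} from Theorem~\ref{thm:ChromNum} by passing to the dual graph, exactly mirroring the duality $\theta(G)=\g(G')$ stated in the preceding Proposition. First I would observe that the metric defined in Theorem~\ref{thm:clique} on $V$ --- distance $a$ along edges of $G$, distance $b$ on the remaining non-zero pairs --- is precisely the metric that Theorem~\ref{thm:ChromNum} attaches to the dual graph $G'$. Indeed, in $G'$ two vertices are adjacent iff they are \emph{non}-adjacent in $G$; so assigning the ``large'' value $b$ to adjacent pairs of $G'$ and the ``small'' value $a$ to the rest reproduces the metric of the present theorem. Thus the $(a,b)$-space $V$ appearing here coincides with the space that Theorem~\ref{thm:ChromNum} builds from the graph $G'$.

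Next I would apply Theorem~\ref{thm:ChromNum} verbatim to $G'$: if $m$ is the largest positive integer with $2d_{GH}(a\D_m,V)=b$, then $\g(G')=m+1$. The number $n$ in the statement of Theorem~\ref{thm:clique} is defined by exactly the same extremal property --- the largest positive integer with $2d_{GH}(a\D_n,V)=b$ --- so $n=m$ and hence $\g(G')=n+1$. Finally, invoking the Proposition $\theta(G)=\g(G')$ gives $\theta(G)=n+1$, which is the desired conclusion.

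The only genuinely checkable point, and the step I expect to require the most care, is verifying that the roles of $a$ and $b$ are swapped consistently when translating between the two theorems. In Theorem~\ref{thm:ChromNum} adjacent vertices get the \emph{large} distance $b$ and non-adjacent pairs get the \emph{small} distance $a$, whereas in Theorem~\ref{thm:clique} adjacent vertices get $a$ and non-adjacent pairs get $b$; the reconciliation is precisely the edge-complementation defining $G'$, so the constraint $0<a<b\le 2a$ is preserved and the hypotheses of Theorem~\ref{thm:ChromNum} are met for the space built from $G'$. Once this bookkeeping is confirmed, the proof is a two-line composition of Theorem~\ref{thm:ChromNum} with the duality Proposition, and no new computation of Gromov--Hausdorff distances is needed.
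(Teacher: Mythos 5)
Your proposal is correct and is exactly the derivation the paper intends: the paper states the duality $\theta(G)=\g(G')$ immediately beforehand and presents Theorem~\ref{thm:clique} as a reformulation of Theorem~\ref{thm:ChromNum}, which is precisely your argument of applying Theorem~\ref{thm:ChromNum} to the dual graph $G'$ (whose induced $(a,b)$-metric coincides with the one in the statement) and then invoking $\theta(G)=\g(G')$. Your careful check that complementing the edge set swaps the roles of $a$ and $b$ consistently, while the hypothesis $0<a<b\le2a$ is unaffected, is the only bookkeeping needed, and you handled it correctly.
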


Let $0<a<b\le2a$ and $X$ be an arbitrary $(a,b)$-space. Construct the graph $H_X=(V,E)$ by reversing the procedure from Theorem~\ref{thm:clique}, namely, set $V=X$ and connect vertices $x,y\in X$ by an edge if and only if $|xy|=a$. From Corollary~\ref{cor:MSTSpect} and Theorem~\ref{thm:clique}, we immediately obtain the following result.

\begin{cor}
Let $X_i$,  $i=1,\,2$,  be some $(a_i,b_i)$-spaces and $0<a_1<b_1\le2a_1$, $0<a_2<b_2\le2a_2$. Assume that  $X_1$ and $X_2$ are indistinguishable. Then they have the same cardinality, $a_1=a_2$, $b_1=b_2$, and the clique covering numbers of the graphs $H_{X_1}$ and $H_{X_2}$ coincide.
\end{cor}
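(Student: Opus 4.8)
The plan is to run this argument in complete parallel with the preceding corollary on chromatic numbers, simply replacing Theorem~\ref{thm:ChromNum} by Theorem~\ref{thm:clique}. The four assertions split naturally: the first three (equal cardinality, $a_1=a_2$, $b_1=b_2$) are to be extracted from the $\mst$-invariants supplied by Corollary~\ref{cor:MSTSpect}, and the last one (equal clique covering numbers) then follows from Theorem~\ref{thm:clique} together with the definition of indistinguishability.

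First I would invoke Corollary~\ref{cor:MSTSpect}. Since $X_1$ and $X_2$ are finite (each $H_{X_i}$ is a finite simple graph on the vertex set $X_i$) and indistinguishable, they have the same cardinality, the same diameter, and identical $\mst$-spectra. This already settles the cardinality claim.

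Next I would read off $a_i$ and $b_i$ from these invariants. In an $(a_i,b_i)$-space every nonzero distance equals $a_i$ or $b_i$ with $a_i<b_i$, so the largest distance is $b_i$ and the smallest nonzero distance is $a_i$; hence $\diam X_i=b_i$, and, since the single globally shortest edge always belongs to a minimal spanning tree, the last (smallest) coordinate $\s_{m-1}(X_i)$ of the spectrum $\s(X_i)$ equals $a_i$. Equality of diameters then gives $b_1=b_2=:b$, and equality of spectra gives $a_1=a_2=:a$. The one point needing care here is the degenerate situation in which one of the two distance values fails to occur: I would note that for a genuine $(a_i,b_i)$-space both values are realized, so $H_{X_i}$ is neither complete nor edgeless and the identifications $b_i=\diam X_i$, $a_i=\s_{m-1}(X_i)$ are legitimate.

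Finally I would observe that $H_{X_i}$ is exactly the graph reconstructed from $X_i$ by reversing the construction in Theorem~\ref{thm:clique}, so $\theta(H_{X_i})=n_i+1$, where $n_i$ is the largest positive integer with $2d_{GH}(a\D_{n_i},X_i)=b$. Indistinguishability means $2d_{GH}(a\D_n,X_1)=2d_{GH}(a\D_n,X_2)$ for every $n$, so the defining condition for $n_i$ is identical for $i=1,2$; therefore $n_1=n_2$ and $\theta(H_{X_1})=\theta(H_{X_2})$. The main (and essentially the only) obstacle is the middle step, namely justifying that $a_i$ and $b_i$ are canonically recoverable from the invariants of Corollary~\ref{cor:MSTSpect} (diameter and $\mst$-spectrum); once this is done, everything else is a direct substitution into the cited theorem.
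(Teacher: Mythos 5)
Your proposal is correct and follows exactly the route the paper intends: the paper derives this corollary directly from Corollary~\ref{cor:MSTSpect} (equal cardinality, diameter, and $\mst$-spectrum) combined with Theorem~\ref{thm:clique}, which is precisely your argument. Your middle step --- recovering $b_i=\diam X_i$ from equal diameters and $a_i=\s_{m-1}(X_i)$ from equal spectra (using that a globally shortest edge lies in some minimal spanning tree), with the sensible caveat that both distance values must actually be realized --- is exactly the detail the paper leaves implicit in its ``we immediately obtain''.
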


\section{Distinguishability and the Borsuk Number}
\markright{\thesection.~Distinguishability and the Borsuk Number}
In this section we discuss the connection between distinguishability and the famous Borsuk problem: Into how many parts should an arbitrary non-single-point subset of the Euclidean space $\R^n$ be partitioned so that the diameters of the partition elements are smaller than the diameter of the initial set. In~1933, K.~Borsuk stated the famous conjecture that any bounded subset of $\R^n$ can be partitioned into $(n+1)$ subsets of smaller diameter. This conjecture was proved by Hadwiger~\cite{Hadw} and~\cite{Hadw2} for convex subsets with smooth boundary, and unexpectedly disproved in the general case in~1993~\cite{KahnKalai}. The current state of the art is described, for example, in~\cite{Raig}.

In~\cite{IvaTuzBorsuk} the following generalization of the Borsuk problem is formulated. Let $X$ be a bounded metric space, $n$ a cardinal number, $n\le\#X$, and $D=\{X_i\}_{i\in I}\in\cD_n(X)$. We say that $D$ is a partition into parts \emph{of strictly smaller diameter} if there exists $\e>0$ such that $\diam X_i\le\diam X-\e$ for all $i\in I$. The following problem is called the \emph{generalized Borsuk problem\/}: find out whether a given bounded metric space can be partitioned into a given number of parts of strictly smaller diameter. It turns out that the answer can be obtained in terms of the Gromov--Hausdorff distance to suitable simplexes.

\begin{thm}[\cite{IvaTuzBorsuk}]
Let $X$ be an arbitrary bounded metric space and $n$ a cardinal number such that $n\le\#X$. Chose an arbitrary real $\l$,  $0<\l<\diam X$, then $X$ can be partitioned into $n$ parts of strictly smaller diameter if and only if $2d_{GH}(\l\D_n,X)<\diam X$.
\end{thm}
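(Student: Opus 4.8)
The plan is to derive the statement directly from the formula of Theorem~\ref{thm:GenFormulaSmallSympl}, writing $d=\diam X$ throughout. First I would record the elementary reformulation of the geometric hypothesis: a partition $D=\{X_i\}_{i\in I}\in\cD_n(X)$ consists of parts of strictly smaller diameter if and only if $\diam D<d$. Indeed, if $\diam D<d$ then $\e:=d-\diam D>0$ satisfies $\diam X_i\le\diam D=d-\e$ for every $i$; conversely, the existence of such an $\e>0$ forces $\diam D=\sup_i\diam X_i\le d-\e<d$. Hence the left-hand side of the claimed equivalence is precisely the assertion that there exists $D\in\cD_n(X)$ with $\diam D<d$.

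Next I would examine the three quantities inside the maximum in Theorem~\ref{thm:GenFormulaSmallSympl}, namely $\diam D$, $\l-\a(D)$, and $d-\l$, and show that the hypothesis $0<\l<d$ makes the last two strictly smaller than $d$ for every admissible $D$. Since $\a(D)\ge0$ (with the convention $\a(D)=\infty$ for $n=1$), we have $\l-\a(D)\le\l<d$; and since $\l>0$ we have $d-\l<d$. At the same time $\diam D\le d$ always, because each part is a subset of $X$. Consequently, setting $v(D):=\max\{\diam D,\,\l-\a(D),\,d-\l\}$, one gets $v(D)\le d$, and $v(D)<d$ holds if and only if $\diam D<d$, since $\diam D$ is the only term in the maximum that can attain the value $d$.

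To finish, I would feed this back into $2d_{GH}(\l\D_n,X)=\inf_{D}v(D)$. Because every $v(D)\le d$, the infimum is strictly below $d$ exactly when some individual $v(D_0)$ is strictly below $d$: if $v(D_0)<d$ then $\inf_D v(D)\le v(D_0)<d$, while if $\inf_D v(D)<d$ then not all values $v(D)$ equal $d$, so some $v(D_0)<d$. Combining this with the preceding paragraph, $2d_{GH}(\l\D_n,X)<d$ if and only if there is a partition $D_0$ with $\diam D_0<d$, which by the first paragraph is exactly the existence of a partition of $X$ into $n$ parts of strictly smaller diameter.

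The argument is short once Theorem~\ref{thm:GenFormulaSmallSympl} is available, so I do not expect a serious obstacle; the only points demanding care are the two-sided use of the hypothesis $0<\l<d$ and the passage to the infimum. The lower bound $\l>0$ kills the term $d-\l$ and the upper bound $\l<d$ kills the term $\l-\a(D)$, and dropping either would let one of these terms reach $d$ and decouple the Gromov--Hausdorff distance from the diameters of the parts. The verification that passing to the infimum over partitions does not spoil the strict inequality is the other delicate step, and it is guaranteed precisely by the uniform bound $v(D)\le d$.
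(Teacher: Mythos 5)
Your proposal is correct: the reformulation of ``parts of strictly smaller diameter'' as $\diam D<\diam X$ (the uniform $\e$ being exactly $\diam X-\diam D$), the observation that $0<\l<\diam X$ forces $\l-\a(D)\le\l<\diam X$ and $\diam X-\l<\diam X$ for every $D\in\cD_n(X)$ (including the $n=1$ convention $\a(D)=\infty$), and the passage through the infimum are all sound. The paper states this theorem only as a citation to~\cite{IvaTuzBorsuk} without reproducing a proof, and your derivation from Theorem~\ref{thm:GenFormulaSmallSympl} is precisely the natural argument used in that source, so the two approaches essentially coincide.
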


\begin{cor}
Let $X$ and $Y$ be indistinguishable metric spaces. Then for each cardinal number $n$, $n\le\min\{\#X,\#Y\}$, the spaces $X$ and $Y$ either simultaneously allow or do not allow a partition  into $n$ parts of strictly smaller diameter.
\end{cor}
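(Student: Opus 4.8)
The plan is to deduce the corollary directly from the generalized Borsuk criterion stated just above (from~\cite{IvaTuzBorsuk}), using the fact that indistinguishable spaces share the same diameter. Since that criterion concerns bounded spaces, I work with bounded $X$ and $Y$; their diameters are then finite, and by Corollary~\ref{cor:diam} they coincide. I would write $D:=\diam X=\diam Y$ and fix an arbitrary real $\l$ with $0<\l<D$, which is possible exactly because $D$ is positive and finite.

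Next I would fix a cardinal $n$ with $n\le\min\{\#X,\#Y\}$, so that $n\le\#X$ and $n\le\#Y$ simultaneously and the Borsuk criterion applies to each space for this $n$ and this $\l$. Applying it to $X$ shows that $X$ admits a partition into $n$ parts of strictly smaller diameter if and only if $2d_{GH}(\l\D_n,X)<D$, and applying it to $Y$ shows the analogous equivalence with $2d_{GH}(\l\D_n,Y)<D$. The decisive point is that the threshold $D$ is the same in both criteria, again by Corollary~\ref{cor:diam}, so the two conditions differ only in which space is tested.

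The last step invokes indistinguishability in its defining form: the Gromov--Hausdorff distances from $X$ and from $Y$ to every simplex agree, so in particular $d_{GH}(\l\D_n,X)=d_{GH}(\l\D_n,Y)$ for our chosen $\l$ and $n$. Hence the two inequalities $2d_{GH}(\l\D_n,X)<D$ and $2d_{GH}(\l\D_n,Y)<D$ are literally one and the same condition, and the two partition properties are therefore equivalent; since $n$ was an arbitrary cardinal not exceeding $\min\{\#X,\#Y\}$, this yields the claim. There is no genuine obstacle here --- the result is a clean composition of two already-established facts --- and the only point demanding the slightest care is recognizing that a single equality of distances transfers the partition property precisely because both spaces are compared against the common critical value $D$.
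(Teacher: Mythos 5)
Your argument is correct and is exactly the intended one: the paper states this corollary as an immediate consequence of the generalized Borsuk criterion, relying precisely on the equality of diameters (Corollary~\ref{cor:diam}) and the equality $d_{GH}(\l\D_n,X)=d_{GH}(\l\D_n,Y)$ from the definition of indistinguishability. Your only implicit assumption beyond the paper's is $\diam X>0$ when choosing $\l$, but the degenerate case $\diam X=0$ is trivial, since then neither space admits any partition into parts of strictly smaller diameter.
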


\section{Examples of Indistinguishable Spaces}
\markright{\thesection.~Examples of Indistinguishable Spaces}

In conclusion, let us give some examples of indistinguishable spaces.

\subsection{Indistinguishable three-point and continuum spaces}
Let $M$ be a three-point space with distances $a<b<c$, and let $X$ be a disjoint union of connected bounded subsets $X_1$, $X_2$, $X_3$ such that the diameters of all the three ones are $d<(c-a)/2$, and the distances between any points from $X_1$ and $X_2$, $X_1$ and $X_3$, $X_2$ and $X_3$ are $a$, $b$, $c$, respectively. Then
$$
d_{GH}(\l\D_n,M)=d_{GH}(\l\D_n,X).
$$
Indeed, for $n>\#X$ this implies from Theorem~\ref{thm:BigSympl}. It remains to verify the case $n\le\#X$.

First, let $n>3$. Then each $D\in\cD_n(X)$ partitions one of the connected spaces $X_i$, and therefore $\a(D)=0$. By the corollary~\ref{cor:GHforAlpha0}, we have
$$
2d_{GH}(\l\D_n,X)=\max\Bigl\{d_n(X),\l,\diam X-\l\Bigr\}.
$$
On the other hand, $\max\{\l,\diam X-\l\}\ge(\diam X)/2=c/2$, and for $D\in\cD_n(X)$, each element of which lies in some $X_i$, we have $\diam D\le(c-a)/2<c/2$, therefore $d_n(X)<c/2$ and, therefore, $2d_{GH}(\l\D_n,X)=\max\{\l,\diam X-\l\}$. By Theorem~\ref{thm:BigSympl}, the same formula holds for $d_{GH}(\l\D_n,M)$.

Let $n=3$. Let $D_0=\{X_1,X_2,X_3\}$, then for each $D\in\cD_3(X)$, $D\ne D_0$, we have $\a(D)=0$ again. Define $d'_n(X)=\inf_{D\ne D_0}\diam D$, then $d'_n(X)<c/2$ again and
$$
\inf_{D\ne D_0}\max\Bigl\{\diam D,\l,\diam X-\l\Bigr\}=\max\{\l,\diam X-\l\}=\max\{\l,c-\l\}.
$$
On the other hand, $\a(D_0)=a$, therefore
$$
\max\Bigl\{\diam D_0,\l-\a(D_0),\diam X-\l\Bigr\}=\max\{d,\l-a,c-\l\}.
$$
Given that $d<(c-a)/2$ and $\max\{d,\l-a,c-\l\}\ge(c-a)/2$, obtain $2d_{GH}(\l\D_n,X)=\max\{\l-a,c-\l\}$. Finally, in the paper~\cite{IvaTuzIrreduce} it is shown that the Gromov--Hausdorff distance between two three-point spaces can be calculated as the $\max$-norm of the difference of ordered distance vectors. Therefore, the same formula holds for $d_{GH}(\l\D_n,M)$.

Let $n=2$. Considering $\l\D_2$ as a degenerated three-point space with the distances $0$, $\l$, $\l$ and applying the same result from~\cite{IvaTuzIrreduce} conclude that
$$
2d_{GH}(\l\D_2,M)=\max\bigl\{a,|b-\l|,|c-\l|\bigr\}.
$$
Let us show that the same formula is valid for the space $X$. Notice that for each partition $D\in\cD_2(X)$ we have $\diam D\ge a$, because one element of this partition contains points from at least two sets $X_i$, of which $X$ consists. Besides, $\a(D)\le b$, because either the partition $D$ splits one of $X_i$ and $\a(D)=0$, or it does not split any $X_i$, and one of its elements is some $X_i$, and the other one is the union of the remaining two $X_j$. So,
$$
2d_{GH}(\l\D_2,X)\ge\max\big\{a,|\l-b|,|c-\l|\big\}.
$$
Notice that this estimate is attained at the partition $D=\{X_1\cup X_2,X_3\}$, that completes the proof.

\subsection{Indistinguishable connected spaces}
Let $X=[0,1]$ be a straight segment and $Z$ a connected metric space, $\diam Z\le1/2$. Put $Y=X\x Z$ and define a distance function on $Y$ as follows:
$$
\bigl|(x_1,z_1)(x_2,z_2)\bigr|=\max\bigl\{|x_1x_2|,|z_1z_2|\bigr\}.
$$
The space $Y$ is connected and $\diam X=\diam Y=1$, therefore
$$
2d_{GH}(\l\D_n,X)=\max\bigl\{d_n(X),\l,1-\l\bigr\},\qquad
2d_{GH}(\l\D_n,Y)=\max\bigl\{d_n(Y),\l,1-\l\bigr\}.
$$
Notice that $d_2(X)\le1/2$, $d_2(Y)\le1/2$, and $\max\{\l,1-\l\}\ge1/2$, therefore using monotony of $d_n$ conclude that $d_n(X)\le1/2$ and $d_n(Y)\le1/2$ for all $n\ge2$, and hence $d_{GH}(\l\D_n,X)=d_{GH}(\l\D_n,Y)$ for $n\ge2$. Since $\diam X=\diam Y$, then $d_{GH}(\D_1,X)=d_{GH}(\D_1,Y)$ also, so the spaces $X$ and $Y$ are indistinguishable.

Let us generalize this example as follows.

\begin{thm}
Let $X$ and $Y$ be $\a$-connected metric spaces of the same diameter, and assume that $d_2(X)\le(\diam X)/2$ and $d_2(Y)\le(\diam Y)/2$. Then $X$ and $Y$ are indistinguishable. In particular, for any cardinal numbers greater than or equal to continuum there exists indistinguishable metric spaces of such cardinalities.
\end{thm}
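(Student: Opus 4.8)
The plan is to check directly that $n\mapsto 2d_{GH}(\l\D_n,X)$ and $n\mapsto 2d_{GH}(\l\D_n,Y)$ agree for every cardinal $n$ and every real $\l\ge0$, using the two explicit formulas already at hand: Corollary~\ref{cor:GHforAlpha0} when $n\le\#X$ (applicable precisely because $X$ is $\a$-connected) and Theorem~\ref{thm:BigSympl} when $n>\#X$, and likewise for $Y$. Throughout I would write $D=\diam X=\diam Y$ and record the elementary inequality $\max\{\l,D-\l\}\ge D/2$, which holds for all $\l$ with equality at $\l=D/2$.

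First I would treat $n=1$: here $\l\D_1=\D_1$ and $2d_{GH}(\D_1,\cdot)=\diam(\cdot)$, so the two distances coincide because the diameters do. For $n\ge2$ the crucial observation is that the diameter term $d_n$ is invisible. Since $d_n$ is non-increasing in $n$ (refining a partition cannot increase its diameter), the hypotheses $d_2(X)\le D/2$ and $d_2(Y)\le D/2$ force $d_n(X)\le D/2$ and $d_n(Y)\le D/2$ for all $n\ge2$ in the relevant range; combined with $\max\{\l,D-\l\}\ge D/2$, every applicable instance of Corollary~\ref{cor:GHforAlpha0} collapses to $2d_{GH}(\l\D_n,\cdot)=\max\{\l,D-\l\}$.

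Next I would run the case analysis on the size of $n$, assuming without loss of generality $\#X\le\#Y$. If $n\le\#X$, both distances are given by Corollary~\ref{cor:GHforAlpha0} and, by the previous step, both equal $\max\{\l,D-\l\}$. If $\#X<n\le\#Y$, then Theorem~\ref{thm:BigSympl} gives $2d_{GH}(\l\D_n,X)=\max\{\l,D-\l\}$ directly, while Corollary~\ref{cor:GHforAlpha0} returns the same value for $Y$. If $n>\#Y$, then both distances are given by Theorem~\ref{thm:BigSympl} and again equal $\max\{\l,D-\l\}$. Thus the two functions coincide identically and $X$ and $Y$ are indistinguishable; the only point requiring care is this bookkeeping across the three cardinality regimes, which is exactly what permits $\#X\ne\#Y$.

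For the concluding assertion, for each cardinal $\k$ not smaller than the continuum $\mathfrak{c}$ I would exhibit a space $Y_\k$ that is connected (hence $\a$-connected), has diameter $1$, satisfies $d_2(Y_\k)\le1/2$, and has cardinality $\k$; the first part then makes all the $Y_\k$ pairwise indistinguishable. I would take $Y_\k=[0,1]\x Z_\k$ with the $\max$-metric, where $Z_\k$ is a connected metric space of diameter $1/2$ and cardinality $\k$, for instance a ``hedgehog'' formed by gluing $\k$ copies of a segment of length $1/4$ at a common endpoint. Then $\diam Y_\k=1$ and $\#Y_\k=\mathfrak{c}\cdot\k=\k$, while cutting the $[0,1]$-factor at $1/2$ produces a two-part partition witnessing $d_2(Y_\k)\le1/2$. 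I expect this construction to be the main obstacle, not through computational difficulty but because the choice of $Z_\k$ is delicate: a large connected space taken on its own, such as a hedgehog of full diameter, fails to satisfy the required bound on $d_2$ (three of its tips are pairwise at the full diameter, so no two-part partition has smaller diameter), and it is precisely the auxiliary $[0,1]$-factor that supplies a cutting direction keeping $d_2$ at most half the diameter.
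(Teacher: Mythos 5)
Your proof is correct and is essentially the argument the paper intends: the paper states this theorem without a separate proof, offering the preceding example \((X=[0,1]\), \(Y=[0,1]\times Z\) with the \(\max\)-metric\()\) as the template, and your case analysis via Corollary~\ref{cor:GHforAlpha0}, the monotonicity of \(d_n\), and Theorem~\ref{thm:BigSympl} is exactly the computation carried out there. Your explicit hedgehog construction for \(Z_\k\) (including the observation that a bare full-diameter hedgehog fails \(d_2\le(\diam)/2\), so the \([0,1]\)-factor is needed) correctly supplies the cardinality claim that the paper leaves implicit.
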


\markright{References}

\end{document}